\newtheorem{theorem}{Theorem}
\newtheorem{corollary}[theorem]{Corollary}
\newtheorem{definition}[theorem]{Definition}
\newtheorem{example}[theorem]{Example}
\newtheorem{lemma}[theorem]{Lemma}
\newtheorem{notation}[theorem]{Notation}
\newtheorem{proposition}[theorem]{Proposition}
\title{ Large Deviations for Quantum Spin probabilities at temperature zero}
\author{Artur O. Lopes, Jairo K. Mengue, \\
Joana Mohr and Carlos G. Moreira (*)  \\
\\ Inst. Mat - UFRGS, Brazil and \\
(*) IMPA - Brazil}
\begin{document}

\maketitle

\begin{abstract}

We consider certain self-adjoint observable for the KMS state associated to  the Hamiltonian $H= \sigma^x \otimes \sigma^x$ over the quantum spin lattice
$\mathbb{C}^2 \otimes \mathbb{C}^2 \otimes \mathbb{C}^2  \otimes  ...$. For a fixed observable of the form $L \otimes L \otimes L  \otimes  ...$, where  $L:\mathbb{C}^2 \to \mathbb{C}^2 $, and for the zero temperature limit one can get a naturally defined stationary probability $\mu$ on
the Bernoulli space $\{1,2\}^\mathbb{N}$. This probability is ergodic but it is not mixing for the shift map. It is not a Gibbs state for a continuous normalized potential but its Jacobian assume only two values almost everywhere.  Anyway, for such probability $\mu$ we can show that a Large Deviation Principle is true for a certain class of  functions. The result is derived by showing the explicit form of  the free energy which is differentiable.

\end{abstract}

\section{Introduction}

We analyze from the point of view of Ergodic Theory a probability $\mu$ which appears in a natural way on a physical problem of quantum nature (see \cite{Pil, RMNS}). We point out that the system we considered in this work - for the construction of the associated $\mu$ - is of quantum nature, but the analysis of the associated measure is described in terms of classical Ergodic Theory and Symbolic Dynamics.

Given a selfadjoint operator $H$ acting on a finite dimensional Hilbert space $\mathcal{H}$ and a temperature $T>0$,  the density operator
$$\rho_{H,T}= \frac{e^{-\frac{1}{T}\, H }}{Z(T)}, $$
where\footnote{The notation $Tr$ means the trace of the operator} $Z(T)=$ Tr $e^{-\frac{1}{T}\, H }$, is called the KMS state associated to the Hamiltonian $H$.
Among other properties $\rho_{H,T}$ maximizes
$$ - \,\text{Tr}\,\,\, ( H\, \rho ) - \,\,\,\text{Tr}\,\,(\rho\, \log \rho) $$
among density operators $\rho$ acting on $\mathcal{H}$.
In this way the KMS state plays in Quantum Statistical Physics (see \cite{Bra}, \cite{PF} or \cite{Evans} for general results) the role of the Gibbs state in Thermodynamic Formalism. A general reference on Quantum Mechanics is \cite{GS} (see also \cite{Lop}).

 The paper \cite{RMNS} is the main motivation for the present work. We initially recall the general setting of that paper.

The set of linear operators acting on $\mathbb{C}^d$ will be denoted by $\mathcal{M}_d.$
We will call
$\omega=\omega_n:  \underbrace{\mathcal{M}_d \otimes \mathcal{M}_d \otimes...
\otimes \mathcal{M}_d}_n\to \mathbb{C}$ a $C^*$-dynamical state if
$\omega_n ( I^{ \otimes \, n})=1$ and  $\omega_n(a)\geq 0$, if $a$ is a non-negative element in the tensor product.
It follows that $\omega_n(L_1 \otimes L_2 \otimes \,...\otimes L_n)$ is a positive number if all
$L_j \in \mathcal{M}_d$ are positive, $j=1,2,...,n$.

A state  $\omega=\omega_n, \, n\geq 2$ will be defined in the following way: consider a fixed value $\beta>0$  and a complex self-adjoint operator $H$, depending on two variables,
$\,H:  (\mathbb{C}^d \otimes  \mathbb{C}^d)\,\to (\mathbb{C}^d \otimes  \mathbb{C}^d)$. Now for a given natural number  $n\geq 2$
let $ H_n= (\mathbb{C}^d)^{\otimes n} \to (\mathbb{C}^d)^{\otimes n}$ be given by
$$H_n=\sum_{j=0}^{n-2} I^{ \otimes \,j} \otimes  H \otimes  I^{ \otimes\,(n-j-2) } . $$
Set
$$ \rho_{\omega}=\rho_{ \omega_{H,\beta,n}} = \frac{1}{Tr\, ( e^{-\, \beta H_n}  ) }  e^{-\, \beta H_n},$$
and, finally, we define the state $\omega_n=\omega_{H,\beta,n}$ by
\begin{align} \label{def_omega}
\nonumber \omega_{H,\beta,n}(L_1 \otimes L_2 \otimes \,....\otimes L_n)&= \frac{1}{Tr\, ( e^{-\, \beta H_n}  ) }
\text{Tr} [\,e^{-\, \beta H_n} (L_1 \otimes L_2 \otimes \,....\otimes L_n) \,]\\
&=\text{Tr}\,\,( \rho_{\omega}\,L_1 \otimes L_2 \otimes \,....\otimes L_n).
\end{align}
\medskip

The parameter $\beta$ corresponds to $1/T$.

From the above construction, any Hamiltonian $H$ acting on $\mathbb{C}^d \otimes  \mathbb{C}^d$ defines a state $\omega=\omega_{H,\beta,n}$ acting on $\underbrace{\mathcal{M}_d \otimes \mathcal{M}_d \otimes...
	\otimes \mathcal{M}_d}_n$. A Quantum Ising Chain is defined by a Hamiltonian of the form
$$ H=- J\,(\, \sigma_1^x \,\otimes \,\sigma^x_2 \,)- h\, (\,\sigma_1^z\, \otimes\, I\,), $$
where the Pauli matrices are
$$
\sigma^x=\left(
\begin{array}{cc}
0 & 1\\
1 & 0
\end{array}
\right), \hspace{0.5cm}
\sigma^y=\left(
\begin{array}{cc}
0& -i\\
i & 0
\end{array}
\right) \hspace{0.5cm}
\text{and}
\hspace{0.5cm}
\sigma^z=\left(
\begin{array}{cc}
1 & 0\\
0 & -1
\end{array}
\right),$$
while
 $\sigma^x_i$, is the Pauli matrix $\sigma^x$ acting on the position $i$ of the tensor product.
The associated $n$-Hamiltonian is

$$H_n =  \,\sum_{i=1}^n\,[\,\,-J\,(\, \sigma_i^x \,\otimes \,\sigma^x_{i+1} \,)- h\, (\,\sigma_i^z\, \otimes\, I\,)\,\,]. $$
In the case $h=0$ we will say that the Quantum Ising Chain has no magnetic term. This will be the case here as we will see soon.

\medskip

In order to describe the construction of a naturally defined probability on the symbolic space we need some more definitions.

\medskip

 Let  $L:  \mathbb{C}^d \to \mathbb{C}^d $ be a self-adjoint operator and denote by $\lambda_1, \lambda_2,...,\lambda_d$
 its real eigenvalues.  We suppose that $\{\psi_1,...,\psi_d\}$  denotes an orthonormal
set of eigenvectors of $L$ and $P_{\psi_j}: \mathbb{C}^d \to \mathbb{C}^d$ denotes the orthogonal projection on the subspace
generated by $\psi_j$, \,$j\in\{1,...,d\}$.
For any $n\in \mathbb{N}$, the observable
$$ L^{ \otimes \, n}:=(L \otimes L \otimes \,....\otimes L): (\mathbb{C}^d \otimes \mathbb{C}^d \otimes \,....\otimes \mathbb{C}^d)
\to (\mathbb{C}^d \otimes \mathbb{C}^d \otimes \,....\otimes \mathbb{C}^d)$$
has the eigenvector $ (\, \psi_{j_1} \otimes \psi_{j_2} \otimes .... \otimes  \psi_{j_n}\,)$
associated to the eigenvalue  $\lambda_{j_1} \cdot \lambda_{j_2} \cdots  \lambda_{j_n}.$ Any eigenvalue of $ L^{ \otimes \, n}$ is of this form.


The values obtained by physical measuring (associated to the observable $ L^{ \otimes \, n}$) in the finite dimensional Quantum Mechanics setting
are (of course) the eigenvalues of $ L^{ \otimes \, n}$. The relevant information is the probability of each possible outcome event of measuring $ L^{ \otimes \, n}$.

For fixed $H: \mathbb{C}^d \otimes \mathbb{C}^d\to  \mathbb{C}^d \otimes \mathbb{C}^d$, $L: \mathbb{C}^d \to \mathbb{C}^d$,  
%
%
 $n\in\{2,3,...\}$ and $\beta>0$,  we consider the $C^*$-state $\omega_{H,\beta,n}$
given in \eqref{def_omega},
and define the probability $\mu_{\beta,n}$ on $\Omega_{n} =\,\{1, 2,...,d\}^n $
by
\begin{align*}
\mu_{\beta,n} ( \,{j_1} ,  ...,
 {j_n}\,  ) &=\omega_{H,\beta,n}(  \, P_{\psi_{j_1}} \otimes  P_{\psi_{j_2}}\otimes...\otimes  P_{\psi_{j_n}}\,)\\
&=\frac{1}{Tr\, ( e^{-\, \beta H_n}  ) }\text{Tr} [\,e^{-\, \beta H_n} ( P_{\psi_{j_1}} \otimes  P_{\psi_{j_2}}\otimes...\otimes  P_{\psi_{j_n}}).
\end{align*}

We will consider here the case where $d=2$. We denote $\Omega = \{1,2\}^{\mathbb{N}}$ and for fixed $\beta$ with the information of the probabilities
$\mu_{\beta,n}$, $n \in \mathbb{N}$ one can produce a probability $\mu_{\beta}$ on $\Omega$.

 On the zero temperature limit case we consider the probabilities (if the limit exists) $$\mu_{\infty,n}=\lim_{\beta \to +\infty}\mu_{\beta,n}.$$

With this information one can get the probability of each possible outcome event of measuring via $ L^{ \otimes \, n}$. Indeed, for fixed $n$ and a specific eigenvalue $\lambda$ of $ L^{ \otimes \, n}$ we just collect all the $n$-strings $({j_1} ,  {j_2},...,{j_{n-1}}, {j_n})\, $ such that $\lambda =\lambda_{j_1} \cdot \lambda_{j_2} \cdots  \lambda_{j_n}$ and add their individual  probabilities.

For instance, in the case $d=2$, the eigenvalues of $L$ are  the numbers $\lambda_1=1$ and $\lambda_2=-1$,  then the possible eigenvalues of $ L^{ \otimes \, n}$ are also just the values $1$ and $-1$. The $3$-strings producing the value $\lambda=-1$ are
\[ \{(j_1,j_2,j_3) \,|\, \lambda_{j_1}\cdot\lambda_{j_2}\cdot\lambda_{j_3} =-1 \} =
\{(1,1,2),(1,2,1),(2,1,1), (2,2,2)  \}.
 \]

{\bf Assumption A:} In this work we consider the Quantum spin Chain with no magnetic term defined by the Hamiltonian
$H=\sigma_1^x \,\otimes \,\sigma^x_2$.  We consider self-adjoint operators (observable) of the form

$$L=\, \left(
\begin{array}{cc}
\cos^2(\theta)-\sin^2(\theta) & 2\cos(\theta)\sin(\theta)\\
2\cos(\theta)\sin(\theta) & \sin^2(\theta)-\cos^2(\theta)
\end{array}
\right),$$
where\footnote{The case $\theta =0$ corresponds to $L=\sigma^z$ and the case $\theta = \pi/2$ corresponds to $L=-\sigma^z$. We will exclude these cases.}
	  $\theta\in(0,\pi/2)$.

\bigskip


The eigenvalues of $L$ are $\lambda_1=1$ and $\lambda_2=-1$ which are associated, respectively, to the unitary eigenvectors $\psi_1= (\cos(\theta), \sin(\theta))\in \mathbb{C}^2 $ and $\psi_{2} =(-\sin(\theta),\cos(\theta))\in\mathbb{C}^2$, which are orthogonal.

We are mainly interested in the zero temperature limit case. Our main result is the following:

\begin{theorem}\label{teo1}
Suppose that $H$ and $L$ satisfy  Assumption A. Consider the  probability measure $\mu$ on $\Omega = \{1,2\}^{\mathbb{N}}$, such that, for any $n\geq 2$ and $\{j_1,...,j_n\} \in \{1,2\}^n$, $\mu([j_1,...,j_n])=\mu_{\infty,n}(j_1,...,j_n)$, where $[j_1,...,j_n]=\{(x_1,x_2,...)\in \Omega\,|\,x_1=j_1,...,x_n=j_n\}$. Then, the probability $\mu$ is invariant and ergodic for the shift map $\sigma$ acting in $\Omega$. It is not mixing and does not have a continuous Jacobian.
\end{theorem} 	
\medskip

The proof of the above result is the purpose of sections 2, 3 and 4.

One can also show that this Jacobian  assume $\mu$-almost everywhere only the values $p=\frac{1-\sin(2\theta)}{2}$ and $1-p=\frac{1+\sin(2\theta)}{2}$. This will establish an upper and lower bounds for its entropy.
\medskip

\medskip

Some technical results that we need on sections 2, 3 and 4  are proved on the Appendix.

The proofs of our  main results are rigorous and they will be obtained by lengthy estimations and long induction procedures from which we will finally  derive explicit expressions. We believe it is worthwhile for the future study of measures obtained from more general Hamiltonians  (with magnetic term for instance) to  be able to establish what good dynamical properties one can expect (and which ones one can not  expect).

\medskip

 We say that there exists a {\bf Large Deviation Principle} (LDP for short) for the probability $\mu$ on $\Omega$ and the function $A: \Omega \to \mathbb{R}$, if there exist a lower-semicontinuous function $I:\mathbb{R} \to \mathbb{R}$, such that,
 \medskip

 a) for all closed sets $K \subset \mathbb{R}$ we get
$$ \lim_{n \to \infty} \frac{1}{n} \log  \bigg(\mu\,\,\bigg\{ z  \,\text{such that}\,\,\,\, {1 \over n} \sum_{j=0}^{n-1} A(\sigma ^{j} (z)) \in K\, \bigg\}\,\, \bigg) \leq \, - \inf_{s \in K} \, I(s),$$

b) for all open  sets $B \subset \mathbb{R}$ we get
$$ \lim_{n \to \infty} \frac{1}{n} \log  \bigg(\mu\,\,\bigg\{ z  \,\text{such that}\,\,\,\, {1 \over n} \sum_{j=0}^{n-1} A(\sigma ^{j} (z)) \in B\, \bigg\}\,\, \bigg)\geq \,- \inf_{s \in B} \, I(s).$$

\medskip

$I$ is called the deviation function.
\medskip

Item a) can be proved in great generality due to  Chebyshev's inequality (see \cite{El} or section 5 in \cite{Lo3}).

Item b) is more difficult to get. It can be shown to be true in the case the free energy (to be defined below) is differentiable (see \cite{El} or section 5 in \cite{Lo3}).

\medskip

We refer the reader to \cite{Leb}, \cite{Len}, \cite{Hi} and \cite{Oga} for several results on the topic of Large Deviations for Quantum Spin Systems.

\medskip

 Here we will be prove the following result:

\begin{theorem}\label{teo2}
Suppose that $H$ and $L$ satisfy the Assumption A and let $\mu$ be the probability defined on Theorem \ref{teo1}. In the case $A:\Omega \to \mathbb{R}$ satisfies $A(x_1,x_2,...)=A(x_1)$ (it depends just on the first coordinate on $\Omega$), there exists a large deviation principle for $\mu$ and $A$.  This follows from the fact that the free energy
$$c(t)=  \lim_{n \to \infty}  \frac{1}{n}\,\, \log \int e^{t\,(\,A(x) + A(\sigma(x)) + A(\sigma^{2} (x)) + ...
	+ A(\sigma^{n-1} (x))\, \,)} d \mu (x)$$
exists, and is given  by the expression
\[c(t)= \frac{1}{2}\log \bigg(\Big[ e^{t\, A(1)}+e^{t\,A(2)}\Big]^{2}-\sin^2(2\theta)\Big[ e^{t\, A(1)}- e^{t\, A(2)}\Big]^{2}\bigg)-\log(2).\]
\end{theorem}

\bigskip
The proof of this Theorem is on section \ref{LDPs}.
We point out that the above function $c(t)$ (the free energy) is differentiable on $t$. From this will follow item b) above. The deviation function $I$ is the Legendre Transform of $c(t)$ (see \cite{El})

\medskip

We would like to thanks A. Quas for some useful comments which help us  in the proof of Theorem \ref{qq}.

\section{Properties of the associated probability $\mu$}

\medskip

Initially, let us to compute $U=e^{i z\, H}$ for $H= \sigma^x \otimes \sigma^x$ and for any complex number $z$. The relation $\sigma^x \circ \sigma^x= I$ is very helpful.
Note that
\[U=e^{i\,z\, (\sigma^x \otimes \sigma^x)}= \sum_{j=0}^\infty \frac{(iz)^j}{j\, !}\, ( \sigma^x \otimes \sigma^x )^j=
\sum_{j=0}^\infty \frac{(iz)^j}{j\, !}\, ( (\sigma^x)^j \otimes (\sigma^x)^j) \]
\[=\cos(z) \, (I \otimes I)\,+\,i\, \sin(z)\, (\sigma^x \otimes \sigma^x).\]

In Quantum Statistical Mechanics (see \cite{Bra}) it is natural to take $z = i \beta,$ where $\beta$ is real.
In this case
$$B=e^{-\,\beta\, \sigma^x \otimes \sigma^x}= \cos(i\, \beta) \, (I \otimes I)\,+\,i\, \sin(i\,\beta)\, (\sigma^x \otimes \sigma^x).$$

We will use the following notation:
\begin{notation} For fixed $n$,
\[(\sigma^x_{i}\otimes \sigma^x_{i+1})_n =\left\{	
\begin{array}{ll}
\underbrace{I  \otimes... \otimes I}_{i-1}\otimes \sigma^x\otimes \sigma^x \otimes \underbrace{I  \otimes... \otimes I}_{n-i-1}, & i\in\{2,...,n-2\}\\
\sigma^x\otimes \sigma^x \otimes \underbrace{I  \otimes... \otimes I}_{n-2},& i=1\\
\underbrace{I  \otimes... \otimes I}_{n-2}\otimes \sigma^x\otimes \sigma^x,& i=n-1
\end{array}\right.
\]
\end{notation}

Now, we compute $e^{-\beta H_n}:(\mathbb{C}^2)^{ \otimes n} \to (\mathbb{C}^2)^{\otimes n}$. As $(\sigma^x_{i}\otimes \sigma^x_{i+1})_n$ commutes with $(\sigma^x_{j}\otimes \sigma^x_{j+1})_n$, $i,j \in \{1,...,n-1\}$,
%
we get
\begin{align*}
 e^{-\beta H_n} &= e^{-\, \beta\,[\sum_{i=1}^{n-1} (\sigma^x_{i}\otimes \sigma^x_{i+1})_n]}=\prod_{i=1}^{n-1}e^{ -\, \beta\,(\sigma^x_{i}\otimes \sigma^x_{i+1})_n}\\
 &= \prod_{i=1}^{n-1} [\cos(i \beta) \, I^{\otimes n} \,+\,i\, \sin(i \beta)\, (\sigma^x_{i}\otimes \sigma^x_{i+1})_n],
\end{align*}
where the above product represents the composition of the operators.

If $n=4$, for instance, we get

$$ e^{-\beta H_4}= e^{-\, \beta\,[\, (\sigma^x \otimes \sigma^x \otimes I \otimes I) + (I \otimes \sigma^x \otimes \sigma^x \otimes I)+
(I \otimes I \otimes \sigma^x \otimes \sigma^x)\,]}=$$
$$e^{ -\, \beta\,(\sigma^x \otimes \sigma^x \otimes I \otimes I)} \,\circ e^{-\, \beta\, (I \otimes \sigma^x \otimes \sigma^x \otimes I)}
\, \circ \,e^{-\, \beta\, (I \otimes I \otimes \sigma^x \otimes \sigma^x) }=$$
$$ [\, \cos(i \beta) \, (I \otimes I \otimes I \otimes  I)\,+\,i\, \sin(i \beta)\, (\sigma^x \otimes \sigma^x  \otimes I \otimes  I)\,]\, \circ $$
$$ [\, \cos(i \beta) \, (I \otimes I \otimes I \otimes  I)\,+\,i\, \sin(i \beta)\, (I \otimes \sigma^x \otimes \sigma^x  \otimes I)\,]\, \circ $$
 \begin{equation} \label{ooe}[\, \cos(i \beta) \, (I \otimes I \otimes I \otimes  I)\,+\,i\, \sin(i \beta)\, (I \otimes I \otimes \sigma^x \otimes \sigma^x)\,].\,
 \end{equation}

\begin{lemma}\label{tr} If $H=\sigma^x\otimes\sigma^x$, then ${\text{Tr}(e^{-\beta\,H_n})}={\cos^{n-1}(i \beta) 2^n}$.
\end{lemma}
\begin{proof} Note that
		
$$
e^{-\beta H_n}=  \prod_{i=1}^{n-1} [\cos(i \beta) \, I^{\otimes n} \,+\,i\, \sin(i \beta)\, (\sigma^x_{i}\otimes \sigma^x_{i+1})_n]$$
which results in a sum with $2^{n-1}$ terms.
Only the term $ \cos^{n-1}(i \beta)\, ( I^{\otimes n})$ do not contain a product of $\sigma^x$. As Tr $(\sigma^x)=0$ and
$$Tr (L_1\otimes...\otimes L_n)=Tr(L_1)\cdots Tr(L_n),$$
we will get that any term which does  not contain the expression $\cos^{n-1}(i \beta)\, ( I^{\otimes n})$ will produce a null trace. Moreover, as Tr $(I)=2$, we finally get
$$\text{Tr}(e^{-\beta H_n})= \cos^{n-1}(i \beta) Tr ( I^{\otimes n})=  \cos^{n-1}(i \beta) 2^n.$$

\end{proof}

\bigskip

From now on we will consider that $L$ satisfies the Assumption A, that is,
$$L=\, \left(
\begin{array}{cc}
\cos^2(\theta)-\sin^2(\theta) & 2\cos(\theta)\sin(\theta)\\
2\cos(\theta)\sin(\theta) & \sin^2(\theta)-\cos^2(\theta)
\end{array}
\right),$$
where $\theta\in(0,\pi/2)$,
and we will analyze the  corresponding associated observable
$$ L^{\otimes n}= \underbrace{L \otimes  L \otimes  L \otimes... \otimes   L}_n.$$

The eigenvalues of $L$ are $\lambda_1=1$ and $\lambda_2=-1$,  associated, respectively, to the unitary eigenvectors $v_1= (\cos(\theta), \sin(\theta))\in \mathbb{C}^2 $ and $v_{2} =(-\sin(\theta),\cos(\theta))\in\mathbb{C}^2$ - which are orthogonal.
The eigenvectors of $L^{\otimes n}$ are of the form
$$ v_{j_1} \otimes v_{ j_2} \otimes  v_{ j_3} \otimes  v_{ j_4}\otimes ...\otimes v_{j_n},$$
where, $ (j_1,...,j_n) \in \{1,2\}^n.$

We denote by $P_1: \mathbb{C}^2 \to \mathbb{C}^2 $ the projection on $v_1$ and  $P_2: \mathbb{C}^2 \to \mathbb{C}^2 $
the projection on $v_2$.
In this way
$$
P_1= \, \left(
\begin{array}{cc}
\cos(\theta)^2 & \cos(\theta)\, \sin(\theta)\\
\cos(\theta)\, \sin(\theta) & \sin(\theta)^2
\end{array}
\right)$$
and
$$
P_2=  \left(
\begin{array}{cc}
\sin(\theta)^2  & -\cos(\theta)\, \sin(\theta)\\
- \cos(\theta)\, \sin(\theta) & \cos(\theta)^2
\end{array}
\right) .$$

Note that Tr $P_1=$  Tr $P_2= 1.$
Moreover,
$$\sigma^x (P_1) =\left(
\begin{array}{cc}
\cos(\theta)\sin(\theta) & \sin^2(\theta)\\
\cos^2(\theta) & \cos(\theta)\sin(\theta)
\end{array}
\right)
$$
has trace $\beta_1:= \sin(2\, \theta)\in \mathbb{R} $ and
$$\sigma^x (P_2) =
\, \left(
\begin{array}{cc}
-\cos(\theta)\sin(\theta) & \cos^2(\theta) \\
\sin^2(\theta)& -\cos(\theta)\sin(\theta)
\end{array}
\right)$$
has trace $  \beta_2:=- \sin(2\, \theta)\in \mathbb{R}$.
Therefore, Tr
$(\sigma^x (P_2))=\beta_2= - \beta_1$.
Note that, if $\theta\neq \frac{\pi}{4}$, then $\beta_1,\beta_2$ both have modulus smaller  than $1$.

\begin{notation}
	\[\beta_1:= \sin(2\theta) \hspace{1cm} \text{and} \hspace{1cm} \beta_2:= -\sin(2\theta) \]
\end{notation}

\medskip
The probability $\mu_{\beta,n}$ of the element $(j_1,...,j_n) \in \{1,2\}^n$   is given by
$$\mu_{\beta,n}( j_1,,...,j_n )=\frac{1}{Tr\, ( e^{-\, \beta H_n}  ) }\text{Tr} [\,e^{-\, \beta H_n} ( P_{{j_1}} \otimes  P_{{j_2}}\otimes...\otimes  P_{{j_n}})$$
$$=\frac{1}{\cos^{n-1}(i\beta)2^n}\text{Tr}\prod_{i=1}^{n-1} [\cos(i \beta) \, I^{\otimes n} \,+\,i\, \sin(i \beta)\, (\sigma^x_{i}\otimes \sigma^x_{i+1})_n](   P_{j_1} \otimes  P_{j_2}\,\otimes...\otimes P_{j_n}).$$

Using the relation $i\,\sin(\beta i)= - \cos(\beta i)+e^{-\beta} $, when $\beta\to\infty$ we get the following result

\begin{lemma} \label{mu} If $H$ and $L$ satisfy the Assumption A, then for $n\in\{2,3,...\}$,
\begin{equation*}
\mu_{\infty,n}( {j_1} ,  {j_2},..., {j_n}  )
=\frac{1}{2^n}\text{Tr}\left[\prod_{i=1}^{n-1} [\, I^{\otimes n} \,-\, (\sigma^x_{i}\otimes \sigma^x_{i+1})_n](   P_{j_1} \otimes  P_{j_2}\,\otimes...\otimes P_{j_n})\right].
\end{equation*}
\end{lemma}

Taking the limit $\beta \to 0$ will simplify a lot the future computations.

For the benefit of the reader we consider the cases n=2 and n=3. These estimations are important for the induction procedure we will consider later.

\begin{example}\label{example}
As Tr$(\sigma^xP_{j_i})=\beta_{j_i}$, we get
 $$ \mu_{\infty,2} (j_1,j_2)=\frac{1}{2^2}\text{Tr}\bigg[(I\otimes I-\sigma^x\otimes\sigma^x)\circ(P_{j_1}\otimes P_{j_2})\bigg]= $$
 $$=\frac{1}{2^2}\text{Tr}\bigg[P_{j_1}\otimes P_{j_2}-\sigma^x P_{j_1}\otimes\sigma^xP_{j_2}\bigg] = \frac{1}{2^2}\bigg[1-\beta_{j_1}\beta_{j_2}\bigg].$$

 \bigskip

Note that if $\theta \neq \frac{\pi}{4}$, the number
$ 1 - \,\beta_{j_1}\, \beta_{j_2} $
is  positive because   $|\beta_{1}|<1$ and    $|\beta_{2}|<1$.


We also get

 $$ \mu_{\infty,3} (j_1,j_2,j_3)=\frac{1}{2^3}\text{Tr}\,\left\{ \begin{array}{l}( \, ({I  \otimes I \otimes I})\,-
\, (\sigma^x \otimes \sigma^x  \otimes I ))\,\circ \\
( \, ({I \otimes I \otimes  I})-\, (I \otimes \sigma^x \otimes \sigma^x )\, \circ \\
 (   P_{j_1} \otimes  P_{j_2}\,\otimes P_{j_3})\,\,\end{array}\right\}= $$
 $$=\frac{1}{2^3}\bigg[1- \beta_{j_1}\beta_{j_2}-\beta_{j_2}\beta_{j_3}+\beta_{j_1}\beta_{j_3}\bigg]. $$

\end{example}

\medskip

\begin{proposition} \label{stat} For $n\in \{2,3,...\}$,
\[\mu_{\infty,n+1} (1 , j_1 , j_3,...,j_n)+ \mu_{\infty,n+1} (2, j_1 , j_3,..,j_n)= \mu_{\infty,n} ( j_1 , j_3,..,j_n).\]

\end{proposition}

\bigskip

The proof of this Proposition is on the Appendix (see Proposition \ref{stat1}).

\medskip

As \[\mu_{\infty,n}( {j_1} ,  {j_2},..., {j_n}  )
=\frac{1}{2^n}\text{Tr}\left[\prod_{i=1}^{n-1} [\, I^{\otimes n} \,-\, (\sigma^x_{i}\otimes \sigma^x_{i+1})_n](   P_{j_1} \otimes  P_{j_2}\,\otimes...\otimes P_{j_n})\right],\]
it follows from the commutativity of the composition of the several terms above that
\begin{equation} \label{uu} \mu_{\infty,n} ( j_1,j_2,j_3,j_4,..,j_n )=  \mu_{\infty,n} ( j_n,j_{n-1},j_{n-2},...j_2,j_1 ).
\end{equation}

\begin{corollary} \label{sta}
	Suppose that $H$ and $L$ satisfy the Assumption A. There exists a probability $\mu$ on $\{1,2\}^{\mathbb{N}}$ invariant for the shift map $\sigma$, such that, for any $n \in \{2,3,4,...\}$ and any cylinder $[j_1,...,j_n]$, we get $\mu([j_1,...,j_n])=\mu_{\infty,n}(j_1,...,j_n)$.
	\end{corollary}	

\begin{proof} We assume $\mu_{\infty,1}(1)=\mu_{\infty,1}(2)=\frac{1}{2}$ and we observe that $$\mu_{\infty,1}(j)=\sum_{i=1}^2 \mu_{\infty,2}(i,j)=\sum_{j=1}^2 \mu_{\infty,2}(i,j)$$ (see example \ref{example}).
For any $n\in\mathbb{N}$, the probabilities $\mu_{\infty,n}$ and $\mu_{\infty,n+1}$ are compatible in the sense that
$$ \mu_{\infty,n}  (j_1,j_2,j_3,..,j_n) = \mu_{\infty,n+1}  (j_1,j_2,j_3,..,j_n,1)+  \mu_{\infty,n+1}  (j_1,j_2,j_3,..,j_n,2).$$
This  follows from (\ref{uu}) and Proposition \ref{stat}.
In this way by Kolmogorov extension Theorem we get a probability $\mu$, on the
Bernoulli space $\{1,2\}^\mathbb{N},$  satisfying $\mu([j_1,...,j_n])=\mu_{\infty,n}(j_1,...,j_n)\,, \forall\,n \in \mathbb{N}$.
It is invariant for the shift map $\sigma$ from Proposition \ref{stat} - which is true also for $n=1$ (using the above definition of $\mu_{\infty,1}$).
\end{proof}

\begin{notation}
 $\mu(j_1,...,j_n)=\mu([j_1,...,j_n])$.
\end{notation}

\bigskip

We will finish this section by showing how to compute $\mu$ recursively in cylinders. The next result will be very helpful on the next sections.

\begin{theorem}\label{medidacilindro}
Suppose $H$ and $L$ satisfy the Assumption A. Let $\mu$ be the probability measure defined in {corollary} \ref{sta}. For any $n\geq 2$, we have
\small
\begin{equation}\label{eq0}
\mu(k,j_1,...,j_n)= \frac{\mu( j_1,...,j_n )}{2}  + \sum_{i=1}^{n-2}\frac{(-1)^i \beta_k \beta_{j_i} }{2^{i+1}}\mu(j_{i+1},...,j_n)  +\frac{(-1)^n\beta_k(\beta_{j_n}-\beta_{j_{n-1}})}{2^{n+1}} .
\end{equation}
\normalsize
and
\[\mu(k,j_1)= \frac{\mu(j_1)}{2} -\frac{\beta_k\beta_{j_1}}{4}.\]
\end{theorem}

\medskip

The proof of the above Theorem is on the Appendix (see Theorem \ref{medidacilindro1}).

\bigskip

One more example will be presented.

\begin{example}\label{ex}

\medskip

$$\mu(k,j_1,j_2) = \frac{\mu(j_1,j_2)}{2} - \frac{\beta_k \beta_{j_{1}}}{8} +  \frac{\beta_k \beta_{j_2}}{8},$$

$$\mu(k,j_1,j_2,j_3) = \frac{\mu(j_1,j_2,j_3)}{2} - \frac{\beta_k \beta_{j_1}}{4} \mu(j_2,j_3) +  \frac{\beta_k \beta_{j_2}}{16}-  \frac{\beta_k \beta_{j_3}}{16},$$

\begin{align*}
\mu(k,j_1,j_2,j_3,j_4) &= \frac{\mu(j_1,j_2,j_3,j_4)}{2} - \frac{\beta_k \beta_{j_1}}{4} \mu(j_2,j_3,j_4)\\
&\,\,\,\, +\frac{  \beta_k \beta_{j_2}}{8} \mu(j_3,j_4)-  \frac{\beta_k \beta_{j_3}}{32}+ \frac{\beta_k \beta_{j_4}}{32}.
\end{align*}

\end{example}

\section{$\mu$ is ergodic but it is not mixing}
In this section we suppose again that $H$ and $L$ satisfy the Assumption A.
We will show that the probability $\mu$ is {\bf ergodic but it is not mixing}.

\medskip

\begin{proposition} \label{nmix} For any $n\geq 2$ we have that
$$\sum_{j_2,..,j_n}\mu (a ,b , j_2,...,j_n,c,d)=\mu(a,b)\mu(c,d )+\frac{(-1)^{n}( \beta_{b}-\beta_a) (\beta_{c}-\beta_{d})}{2^{4}}. $$
Particularly, $\mu$ is not mixing.
\end{proposition}

\medskip

The proof of this Proposition is on the Appendix (see Proposition \ref{nmix1}).

\medskip

Now we will prove that $\mu$ is ergodic. Initially, we need a preliminary result:

 \begin{proposition}\label{recursive}
 	For $n\geq 1$,
 	$$\mu (k_0 , k_{1} , ...,k_n)=\frac{1}{2}\bigg(1-\frac{\beta_{k_0}}{\beta_{k_1}}\bigg)\mu (k_1 , ...,k_{n}) +\frac{\beta_{k_0}}{2} \bigg(\frac{1}{2\beta_{k_{1}}}-\frac{\beta_{k_{1}}}{2}\bigg)\mu (k_2,...,k_{n}).$$
  \end{proposition}

 \medskip

The proof of this claim is on the Appendix (see Proposition \ref{recursive1})

\medskip
 \begin{example}
 We remember that $\beta_1 = \sin(2\theta)$ and $\beta_2 = -\beta_1$. In the particular case that $\theta=\pi/4$ we have $\beta_1=1$, $\beta_2 = -1$ and, using the above Proposition, we get

 $$\mu (k_0 , k_{1} , ...,k_n)=\frac{1}{2}\bigg(1-\frac{\beta_{k_0}}{\beta_{k_{1}}}\bigg)\mu (k_1 , ...,k_{n}).$$
 If $k_0=k_{1}$, then $\mu (k_0 , k_{1} , ...,k_n)=0$. We conclude that $\mu$ is supported in the periodic orbit of the point $(0,1,0,1,0,1,...)$.

\end{example}

\begin{theorem}
For any cylinders $[a_1,...,a_k]$ and $[b_1,...,b_l]$ we have
\begin{equation}\label{erg}
\lim_{N}\frac{1}{N}\sum_{n=1}^{N}\sum_{j_1,...,j_n}\mu(a_1,...,a_k,j_1,...,j_n,b_1,...,b_l)=\mu(a_1,...,a_k)\mu(b_1,...,b_l).  \end{equation}
Particularly, $\mu$ is ergodic.
\end{theorem}
\begin{proof}
We remark that for $A=[a_1,...,a_k]$ and $B=[b_1,...,b_l]$, equation (\ref{erg})
means
\[\lim_{N}\frac{1}{N}\sum_{n=k+1}^{k+N}\mu(A\cap\sigma^{-n}B)=\mu(A)\mu(B).    \]
It is easy to extend the above expression for any measurable sets. This will show that $\mu$ is ergodic.

We will prove (\ref{erg}) by induction on $k$.
For $k=1$, from Theorem \ref{medidacilindro}, as $\beta_2=-\beta_1$, we get
\begin{align*}
\sum_{j_1,...,j_n}\mu(a_1,j_1,...,j_n,b_1,...,b_l) &= \frac{\mu(b_1,...,b_l)}{2} + \sum_{i=1}^{l-2}\frac{(-1)^{n+i}\beta_{a_1}\beta_{b_i} }{2^{i+1}}\mu(b_{i+1},...,b_l)\\
&\,\,\,+ \frac{(-1)^{n+l-1}\beta_{a_1}\beta_{b_{l-1}}}{2^{l+1}} + \frac{(-1)^{n+l}\beta_{a_1}\beta_{b_{l}}}{2^{l+1}}.
\end{align*}
Then, there exists a constant C, such that
\[\sum_{j_1,...,j_n}\mu(a_1,j_1,...,j_n,b_1,...,b_l) = \frac{\mu(b_1,...,b_l)}{2}+(-1)^nC.\]
Therefore,
\[\lim_{N}\frac{1}{N}\sum_{n=1}^{N}\sum_{j_1,...,j_n}\mu(a_1,j_1,...,j_n,b_1,...,b_l)=\frac{\mu(b_1,...,b_l)}{2}=\mu(a_1)\mu(b_1,...,b_l).\]
For $k=2$ we have, 	
\begin{align*}
\sum_{j_1,...,j_n}&\mu(a_1,a_2,j_1,...,j_n,b_1,...,b_l) \\
&= \sum_{j_1,...,j_n}\frac{\mu(a_2,j_1,...j_n,b_1,...,b_l)}{2}\\
&\,\,\, -\frac{\beta_{a_1}\beta_{a_2}}{4}\mu(b_1,...,b_l)
+ \sum_{i=1}^{l-2}\frac{(-1)^{1+n+i}\beta_{a_1}\beta_{b_i} }{2^{i+2}}\mu(b_{i+1},...,b_l) \\
&\,\, + \frac{(-1)^{n+l}\beta_{a_1}\beta_{b_{l-1}}}{2^{l+2}} + \frac{(-1)^{1+n+l}\beta_{a_1}\beta_{b_{l}}}{2^{l+2}}.
\end{align*}
Using similar computations as above for the case  $k=1$  we get
\begin{align*}
\sum_{j_1,...,j_n}&\mu(a_1,a_2,j_1,...,j_n,b_1,...,b_l) \\
&= \frac{\mu(b_1,...,b_l)}{4} + \sum_{i=1}^{l-2}\frac{(-1)^{n+i}\beta_{a_2}\beta_{b_i} }{2^{i+2}}\mu(b_{i+1},...,b_l)\\
&\,\,\,+ \frac{(-1)^{n+l-1}\beta_{a_2}\beta_{b_{l-1}}}{2^{l+2}} + \frac{(-1)^{n+l}\beta_{a_2}\beta_{b_{l}}}{2^{l+2}}\\
&-\frac{\beta_{a_1}\beta_{a_2}}{4}\mu(b_1,...,b_l)+ \sum_{i=1}^{l-2}\frac{(-1)^{1+n+i}\beta_{a_1}\beta_{b_i} }{2^{i+2}}\mu(b_{i+1},...,b_l) \\
&\,\, + \frac{(-1)^{n+l}\beta_{a_1}\beta_{b_{l-1}}}{2^{l+2}} + \frac{(-1)^{1+n+l}\beta_{a_1}\beta_{b_{l}}}{2^{l+2}},
\end{align*}
that is, there exists a constant C, such that
\[\sum_{j_1,...,j_n}\mu(a_1,a_2,j_1,...,j_n,b_1,...,b_l)=\left(\frac{1}{4}-\frac{\beta_{a_1}\beta_{a_2}}{4}\right)\mu(b_1,...,b_l)+(-1)^n C.  \]
Therefore, as from example \ref{example} we have that $\left(\frac{1}{4}-\frac{\beta_{a_1}\beta_{a_2}}{4}\right)=\mu(a_1,a_2)$, then,
\[\lim_{N}\frac{1}{N}\sum_{n=1}^{N}\sum_{j_1,...,j_n}\mu(a_1,a_2,j_1,...,j_n,b_1,...,b_l)=\mu(a_1,a_2)\mu(b_1,...,b_l).\]

Now, we suppose that for a fixed $k_0$ and any $a_1,...,a_k$, $k\leq k_0$, the equation (\ref{erg}) holds. We want to prove that for any $a\in\{1,2\}$, and any $a_1,...,a_k \in\{1,2\}^{k},\,\,2\leq k\leq k_0,$ we get
\[\lim_{N}\frac{1}{N}\sum_{n=1}^{N}\sum_{j_1,...,j_n}\mu(a,a_1,a_2,...,a_k,j_1,...,j_n,b_1,...,b_l)=\mu(a,a_1,...,a_k)\mu(b_1,...,b_l).\]
From Proposition \ref{recursive},
\begin{align*}
\sum_{j_1,...,j_n}&\mu(a,a_1,a_2,...,a_k,j_1,...,j_n,b_1,...,b_l)\\
&= \sum_{j_1,...,j_n}  \frac{1}{2}\bigg(1-\frac{\beta_{a}}{\beta_{a_1}}\bigg)\mu (a_1,...,a_k,j_1,...,j_l,b_1, ...,b_l)\\
&\,\,\,\,\,+\sum_{j_1,...,j_n}\frac{\beta_{a}}{2} \bigg(\frac{1}{2\beta_{a_{1}}}-\frac{\beta_{a_{1}}}{2}\bigg)\mu (a_2,...,a_k,j_1,...,j_l,b_1, ...,b_l).
 \end{align*}
Using the induction hypothesis and Proposition \ref{recursive} again we get
\begin{align*}
\sum_{j_1,...,j_n}&\mu(a,a_1,a_2,...,a_k,j_1,...,j_n,b_1,...,b_l)\\
&=   \frac{1}{2}\bigg(1-\frac{\beta_{a}}{\beta_{a_1}}\bigg)\mu (a_1,...,a_k)\mu(b_1, ...,b_l)\\
&\,\,\,\,\,+\frac{\beta_{a}}{2} \bigg(\frac{1}{2\beta_{a_{1}}}-\frac{\beta_{a_{1}}}{2}\bigg)\mu (a_2,...,a_k)\mu(b_1, ...,b_l) \\
&= \left[\frac{1}{2}\bigg(1-\frac{\beta_{a}}{\beta_{a_1}}\bigg)\mu (a_1,...,a_k)+\frac{\beta_{a}}{2} \bigg(\frac{1}{2\beta_{a_{1}}}-\frac{\beta_{a_{1}}}{2}\bigg)\mu (a_2,...,a_k)\right]\mu(b_1, ...,b_l)\\
&=[\mu(a,a_1,...,a_k)]\mu(b_1,...,b_l).
\end{align*}

This shows the claim for all cylinder sets as  wanted.

\end{proof}

\section{$\mu$ it is not a  Gibbs probability for a continuous potential}
We continue to suppose that $H$ and $L$ satisfy the Assumption A.

\medskip

We denote
$$a(k_0,k_1)= \frac{1}{2} \bigg(1-\frac{\beta_{k_0}}{\beta_{k_1}}\bigg),$$
$$ b(k_0,k_1)= \frac{1}{4} \beta_{k_0}\bigg(\frac{1}{\beta_{k_{1}}}-\beta_{k_{1}}\bigg),$$
and, $$\gamma=\frac{1}{4} \bigg(1-\beta_1^2\bigg) = \mu(1,1).$$
where $k_0,k_1\in \{1,2\}$, $\beta_1=\sin(2\theta)$ and $\beta_2=-\beta_1$.
The possible values of $a(k_0,k_1)$ and $b(k_0,k_1)$ are:\newline
a) if $k_0=k_1$, then $a(k_0,k_1)=0$ and $0< b(k_0,k_1)=\frac{1}{4} (1-\beta_1^2)=\gamma<\frac{1}{4};$\newline
b) if $k_0\neq k_1$, then $a(k_0,k_1)=1$ and $-\frac{1}{4}<b(k_0,k_1)=\frac{1}{4} (\beta_1^2-1)=-\gamma<0.$

\bigskip

From Proposition \ref{recursive},
\begin{equation}\label{eqrec}
\mu (k_0 , k_{1} , ...,k_n)=a(k_0,k_1)\mu (k_1 , ...,k_{n}) + b(k_0,k_1)\mu (k_2,...,k_{n}).
\end{equation}

\bigskip

\begin{proposition} Suppose  $\theta \neq \pi/4$. Then, $\mu$ is positive on cylinders sets.
\end{proposition}

\begin{proof} It can be checked directly that any cylinder of size 1, 2 or 3 has a positive probability. Suppose that $\mu(x_1,...,x_n)>0$ for any cylinder set of size $n$. As $\mu$ is stationary
\[\mu(x_0,...,x_n) = \mu(1,x_0,x_1,...x_n) + \mu(2,x_0,x_1,...,x_n) \geq \mu(x_0,x_0,x_1,...x_n)\]
\[ = a(x_0,x_0)\mu(x_0,...,x_n) + b(x_0,x_0)\mu(x_1,...,x_n) = \gamma\mu(x_1,...,x_n) >0.\]
Then, the result follows easily by induction. \end{proof}

\bigskip



We get also a corollary about the Jacobian $J$ (which formally could be more properly  called the inverse Jacobian - particular case of Lebesgue measure) of $\mu$.
Define for $x=(x_0,x_1,x_2,...)$,
$$J_{\mu}(x)=J(x) := \lim_{n\to\infty} \frac{\mu(x_0,...,x_n)}{\mu(x_1,...,x_n)},$$
if, the limit exists. It is known that for any invariant measure $\nu$, the Jacobian $J_{\nu}$ is well defined for $\nu$ a.e. $x$ and can be seen as the Radon-Nikodym derivative of $\nu$ over the inverse branches of $\sigma$ (see \cite{PP}, \cite{Man}, \cite{Lo3} or \cite{PoYu}).

Moreover, the entropy satisfies $ h(\mu)= - \int \log J_{\mu} d \mu. $

We say that $f$ is normalized if for any $x\in \Omega$ we have $\sum_{\sigma(y)=x} e^{f(y)}=1$ (see \cite{PP}).

If $\nu$ is the equilibrium measure of a Lipschitz normalized function $f$, then $ J_{\nu}=e^f$ (see \cite{PP}).
\medskip

From (\ref{eqrec}), for any $n\geq 1$,
\begin{equation} \label{hhg}\frac{\mu (k_0 , k_{1} , ...,k_n)}{\mu(k_1,...,k_n)}=a(k_0,k_1) + b(k_0,k_1)\frac{\mu (k_2,...,k_{n})}{\mu(k_1,...,k_n)},
\end{equation}
then, we get the following:

\begin{corollary}
\begin{equation} \label{cfg} J(k_0,k_1,k_2,...) = a(k_0,k_1) + b(k_0,k_1)\frac{1}{J(k_1,k_2,k_3,...)}.
\end{equation}
\end{corollary}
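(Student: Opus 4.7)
The statement is essentially obtained by passing to the limit $n\to\infty$ in the already established identity~(\ref{hhg}). My plan is to treat each side of that identity separately and then combine.

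First I would fix the full $\mu$-measure set $\Omega_0$ on which $J(x)$ exists and is strictly positive (this is standard for reasonable invariant measures and is what the cited references guarantee; it is also consistent with the preceding proposition, which shows that $\mu$ is strictly positive on all cylinders, so every prelimit ratio is well-defined and positive). On $\Omega_0$, the left hand side $\mu(k_0,k_1,\ldots,k_n)/\mu(k_1,\ldots,k_n)$ converges by definition to $J(k_0,k_1,k_2,\ldots)$. Since $\mu$ is $\sigma$-invariant (Corollary~\ref{sta}), the set $\Omega_0\cap\sigma^{-1}(\Omega_0)$ also has full measure, and on this set both $J(x)$ and $J(\sigma x)=J(k_1,k_2,\ldots)$ exist and are strictly positive.

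On the right hand side of~(\ref{hhg}), the coefficients $a(k_0,k_1)$ and $b(k_0,k_1)$ do not depend on $n$, so only the fraction $\mu(k_2,\ldots,k_n)/\mu(k_1,\ldots,k_n)$ needs analysis. I would rewrite it as
$$\frac{\mu(k_2,\ldots,k_n)}{\mu(k_1,\ldots,k_n)} = \left(\frac{\mu(k_1,k_2,\ldots,k_n)}{\mu(k_2,\ldots,k_n)}\right)^{-1},$$
and observe that the expression inside the parentheses converges by definition of $J$ applied at the point $\sigma x$ to $J(k_1,k_2,\ldots)>0$. Continuity of $z\mapsto 1/z$ on $(0,\infty)$ then yields $\mu(k_2,\ldots,k_n)/\mu(k_1,\ldots,k_n)\to 1/J(k_1,k_2,\ldots)$. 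Combining both limits in~(\ref{hhg}) gives the claimed identity $\mu$-almost everywhere.

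The only delicate point is the justification that $J(\sigma x)\neq 0$, since otherwise the passage to the reciprocal is illegitimate; this is where the proposition on positivity of cylinders and the $\sigma$-invariance of $\mu$ do the real work. Once that positivity is secured, the rest of the argument is a routine limit exchange and no further computation is required.
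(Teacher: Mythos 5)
Your argument is correct and is essentially the paper's own: the authors simply pass to the limit $n\to\infty$ in (\ref{hhg}) without further comment, and you supply the routine justification (a.e.\ existence of $J$ and $J\circ\sigma$ via $\sigma$-invariance, and positivity of $J(k_1,k_2,\ldots)$ to legitimize the reciprocal, which the paper itself later secures with the bound $\gamma\le J^n\le 1-\gamma$). No gap; nothing further is needed.
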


\medskip


\medskip

\noindent
{\bf Remark:}  We have that $\gamma\leq J\leq 1-\gamma $.

Indeed,  denote by $J^n(x):=\frac{\mu(x_0,...,x_n)}{\mu(x_1,...,x_n)}$, $x=(x_0,x_1,...)$.
From (\ref{hhg}) we get
\[J^n(x_0,x_1,x_2,...) = a(x_0,x_1) + b(x_0,x_1)\frac{1}{J^{n-1}(x_1,x_2,x_3,...)}.\]
It follows that
\begin{equation}\label{desjac} 1 \geq J^{n+1}(x_0,x_0,x_1,x_2,x_3,...) = \gamma\frac{1}{J^{n}(x_0,x_1,x_2,x_3,...)},
\end{equation}
and, then
\[J^n(x_0,x_1,...) \geq \gamma ,\]
for any $n$ and $x=x_0,x_1,...\in \Omega$.

As the sum $J^n(1,x_2,x_3,..) + J^n(2,x_2,x_3,..)=1$ we get  also that $J^n\leq 1-\gamma$.
Finally, we observe that $J(x)=\lim_{n}J^n(x)$, if the limit exists.


\medskip

\bigskip

\begin{proposition} \label{oui} The limit of  the sequence $\frac{\mu(x_0,...,x_n)}{\mu(x_1,...,x_n)}$, when $n \to \infty$, does not exists for $x=1^\infty = (1,1,1,1,1...)$.
\end{proposition}

\begin{proof} from (\ref{hhg}) we get

\begin{equation} \label{kkk}\frac{\mu (k_0 , k_{1} , ...,k_n)}{\mu(k_1,...,k_n)}=a (k_0,k_1) + b(k_0,k_1)\,\frac{\mu (k_2,...,k_{n})}{\mu(k_1,...,k_n)},
\end{equation}
where $a(1,1)=0 $ and $0<b(1,1)=\gamma<1$.
Then
$$\frac{\mu (\underbrace{1, ...,1}_{n+1})}{\mu(\underbrace{1,..,1}_n)}=\gamma\,\,\frac{\mu (\underbrace{1, ...,1}_{n-1})}{\mu(\underbrace{1, ...,1}_{n})}.$$

As $ \frac{\mu(1,1)}{ \mu(1)}= 2\, \gamma$
we get, by induction,
$$\frac{\mu (\underbrace{1, ...,1}_{n+1})}{\mu(\underbrace{1,..,1}_n)}=1/2,$$
for $n$ even, and
$$\frac{\mu (\underbrace{1, ...,1}_{n+1})}{\mu(\underbrace{1,..,1}_n)}=2\,\gamma,$$
for $n$ odd.
Therefore, $J(1^\infty)$ does not exist.
\end{proof}

\bigskip

Let $h(\mu)$ be the Kolmogorov-Sinai entropy of $\mu$.
We will use the following definition of Gibbs state:

\begin{definition} \label{ees} Suppose $f$ is a continuous potential $f:\{1,2\}^{\mathbb{N}} \to \mathbb{R}$, such that, for all $x\in \{0,1\}^{\mathbb{N}}$ we have
$\sum_{\sigma(y)=x} e^{f(y)}=1$. Then,  if $\nu$ is a $\sigma$-invariant probability such that:

$$\sup_{\rho} \,\bigg\{ h(\rho) + \int f d \rho \,\bigg\}= h(\nu) + \int f\, d\nu,$$
we say that $\nu$ is a Gibbs state for $f$.

\end{definition}

\begin{theorem} \label{qq} $\mu$ is not a Gibbs state for a continuous potential.

\end{theorem}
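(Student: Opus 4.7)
The plan is to derive a contradiction directly from Proposition \ref{oui}, which shows that the ratio $\mu([1^{n+1}])/\mu([1^n])$ alternates between $1/2$ and $2\gamma$ (and $2\gamma<1/2$ whenever $\theta\neq\pi/4$). The key is to show that if $\mu$ were a Gibbs state for a continuous normalized $f$, then the pointwise limit $J(x)=\lim_n \mu([x_0,\dots,x_n])/\mu([x_1,\dots,x_n])$ would exist \emph{at every} $x\in\{1,2\}^{\mathbb N}$, not just $\mu$-a.e.

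First I would translate Definition \ref{ees} into the transfer-operator fixed-point identity. Assume for contradiction that $\mu$ is a Gibbs state for a continuous $f$ with $\sum_{\sigma(y)=x}e^{f(y)}=1$, and set $g=e^f$. By Ledrappier's classical characterization of equilibrium states for normalized potentials (the hypothesis $f$ normalized gives topological pressure $P(f)=0$, and the supremum in the variational principle is attained exactly on the $g$-measures), $\mu$ is a $g$-measure, i.e.\ $L_f^*\mu=\mu$. Testing this identity on the indicator of the cylinder $[x_0,x_1,\dots,x_n]$ and using that its unique preimage inside the cylinder is of the form $(x_0,y_0,y_1,\dots)$ with $(y_0,y_1,\dots)\in[x_1,\dots,x_n]$, one obtains
\[
\mu([x_0,x_1,\dots,x_n])=\int_{[x_1,\dots,x_n]} g(x_0,y_0,y_1,\dots)\,d\mu(y).
\]

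Second, I would invoke uniform continuity of $g$ on the compact metric space $\{1,2\}^{\mathbb N}$ together with the positivity of $\mu$ on cylinders proved earlier. For any $\epsilon>0$ choose $N$ so that $|g(y)-g(y')|<\epsilon$ whenever $y,y'$ agree on the first $N$ coordinates. For $n\geq N$ and $y\in[x_1,\dots,x_n]$ the point $(x_0,y_0,y_1,\dots)$ agrees with $x$ on the first $n+1\geq N+1$ coordinates, so the integrand above lies within $\epsilon$ of $g(x)$. Dividing by $\mu([x_1,\dots,x_n])>0$ yields
\[
\left|\frac{\mu([x_0,x_1,\dots,x_n])}{\mu([x_1,\dots,x_n])}-g(x)\right|<\epsilon,
\]
for every $x$ and all $n\geq N$. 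Hence $J(x)$ exists and equals $g(x)$ at \emph{every} point.

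Third, this contradicts Proposition \ref{oui}: at $x=1^\infty$ the ratios $\mu([1^{n+1}])/\mu([1^n])$ oscillate between $1/2$ and $2\gamma$, and since $\theta\neq\pi/4$ we have $\gamma<1/4$, so these two cluster values are distinct and $J(1^\infty)$ cannot exist. Therefore no continuous normalized potential $f$ admits $\mu$ as a Gibbs state.

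The main obstacle will be the first step: justifying cleanly that the variational characterization in Definition \ref{ees} implies the transfer-operator identity $L_f^*\mu=\mu$. Once this is in hand, the remainder is a short uniform-continuity argument. I would cite Ledrappier's theorem (or the exposition in \cite{Quas}) for this reduction, noting that normalization of $f$ is exactly what makes $L_f\mathbf 1=\mathbf 1$ and forces any maximizer of $h(\rho)+\int f\,d\rho$ to be a $g$-measure.
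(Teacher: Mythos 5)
Your proof is correct and follows essentially the same route as the paper: both arguments reduce the Gibbs hypothesis to the fixed-point identity $\mathcal{L}_f^*\mu=\mu$ (the paper via Proposition 3.4 of \cite{PP}, you via Ledrappier's characterization of $g$-measures), test it on cylinder indicators, use uniform continuity of $e^f$ together with positivity of $\mu$ on cylinders to force the Jacobian limit to exist at \emph{every} point, and then contradict Proposition \ref{oui} at $1^\infty$.
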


\begin{proof}
	Suppose by contradiction that $\mu$ is Gibbs for a continuous function $f$.
	Denote by $\mathcal{L}_f$ the Ruelle operator  for $f$ (see \cite{PP}). Note that $\mathcal{L}_f(1)=1$.
	It follows that exists a probability $\gamma$ that is a fixed point of the dual operator $\mathcal{L}_f^*,$
	that is, for any continuous function $g:\{1,2\}^{\mathbb{N}}\to\mathbb{R}$,
	\[ \int \sum_{i}e^{f(i,x_1,x_2,...)}g(i,x_1,x_2,...) \,d\gamma(x)= \int g(x)\,d\gamma(x). \]
	This shows that $e^f$ is almost everywhere equal to the Jacobian of $\gamma$ ($J_{\gamma}=e^f$ a.e.), because it is the  Radon-Nikodym derivative of $\gamma$ in the inverse branches. Following the arguments of Proposition 3.4 in \cite{PP} we get
	$$\sup_{\rho} \bigg\{\, h(\rho) + \int f d \rho\bigg\}= h(\gamma) + \int f \, d \gamma =0.$$
	By hypothesis we conclude that 	$h(\mu) + \int f d \mu=h(\gamma) + \int f \, d \gamma = 0.$ Following the arguments of Proposition 3.4 in \cite{PP} again, we get that $\mu$ is a fixed point of the dual operator $\mathcal{L}_f^*$.
	This means that for any function $\varphi$ we have
	$$ \int \mathcal{L}_f \, (\varphi)\, d \mu= \int \varphi d \mu.$$

	Taking $\varphi= I_{ (i_0,i_1,...,i_n)}$ we get that  $\mathcal{L}_f \, (\varphi)(x)=  I_{ (i_1,i_2,...,i_n)} (x)\,e^{f ( i_0 x)}$. Therefore,
	$$ \mu (i_0,i_1,...,i_n) = \int_{ (i_1,i_2,...,i_n)} \,e^{f ( i_0 x)}\, d \mu(x),$$
	and, hence
	$$ \frac{\mu (i_0,i_1,...,i_n) }{\mu (i_1,i_2,...,i_n) } =\frac{ \int_{ (i_1,i_2,...,i_n)} \,e^{f ( i_0 x)}\, d \mu(x)}{\mu (i_1,i_2,...,i_n) } .$$

	The right hand side is an average of values of $e^f$  over decreasing cylinder sets.
	Since 	$e^{f}$ is assumed to be a continuous function, it converges uniformly
	in the sequence $(i_0,i_1,...,i_n..)$ to $e^{f ( i_0,i_1,...,i_n.. )}.$
	Then, for any $i\in\{1,2\}$, we have that for any $x= (i,x_1,...,x_n,...)$ the limit,
	$$\lim_{n\to\infty} \frac{\mu(i,x_1,...,x_n)}{\mu(x_1,...,x_n)}= e^{ f(i,x_1,...,x_n,...)},$$
exists. 	This is a contradiction because, from Proposition \ref{oui}, this property is not true for $x=(1^\infty)$.
	
\end{proof}

We will make some final remarks about $J$.
According to (\ref{cfg}),
$J:\{1,2\}^\mathbb{N} \to \mathbb{R}$ is an unknown  function which satisfies the property:

$$J(k_0,k_1,k_2,...) =  a(k_0,k_1) +  b(k_0,k_1)\,\frac{1}{J(k_1,k_2,k_3,...)}$$
$$ =a(k_0,k_1) +  b(k_0,k_1)\,\frac{1}{a(k_1,k_2) +  b(k_1,k_2)\,\frac{1}{J(k_2,k_3,k_4,...)}}.$$

\medskip
Note that
$$\frac{\mu(k_0,...,k_n)}{\mu(k_1,...,k_n)}=a(k_0,k_1) +  b(k_0,k_1)\,\frac{1}{a(k_1,k_2) +  b(k_1,k_2)\frac{1}{...a(k_{n-1},k_n)+b(k_{n-1},k_n)\frac{1}{1/2}}},$$
then, we obtain the following result:

\begin{lemma}\label{jexpansion}
\begin{align*}
J(k_0&,k_1,k_2,...) = \lim_{n} \frac{\mu(k_0,...,k_n)}{\mu(k_1,...,k_n)}\\
&=  \lim_{n} \left[a(k_0,k_1) +  b(k_0,k_1)\,\frac{1}{a(k_1,k_2) +  b(k_1,k_2)\frac{1}{...a(k_{n-1},k_n)+b(k_{n-1},k_n)\frac{1}{1/2}}}\right],
\end{align*}
if the limit exists.
\end{lemma} In this sense ${J}$ has an expression in continued fraction (see expression (b) in page 2 in \cite{Wall} for the general setting).

We want to show that (when exists) $J$ assume only two possible values. In the proof, the following lemma will be used.

\begin{lemma}\label{1221}
	For any $n\geq 1$ and $k_1,...,k_n \in \{1,2\}$ we have
	\[ \frac{\mu(1,2,2,1,1,k_1,...,k_n)}{\mu(2,2,1,1,k_1,...,k_n)}=\frac{\mu(1,k_1,...,k_n)}{\mu(k_1,...,k_n)}.\]
	Particularly,  $J$ is not defined in $(1,2,2,1)^\infty=(1,2,2,1,1,2,2,1,...)$	
\end{lemma}
\begin{proof}
Note that
\[\frac{\mu(1,2,2,1,1,k_1,...,k_n)}{\mu(2,2,1,1,k_1,...,k_n)}=1-\gamma\frac{1}{0+\gamma\frac{1}{1-\gamma\frac{1}{0+\gamma \frac{1}{\frac{\mu(1,k_1,...,k_n)}{\mu(k_1,...,k_n)}}  }}}= \frac{\mu(1,k_1,...,k_n)}{\mu(k_1,...,k_n)}.\]
Particularly,
\[\frac{\mu(1,2,2,1,...,1,2,2,1,1,2)}{\mu(2,2,1,...,1,2,2,1,1,2)}=\frac{\mu(1,2)}{\mu(2)}=\frac{1}{2}(1+\beta_1^2),\]
while
\[\frac{\mu(1,2,2,1,...,1,2,2,1,1)}{\mu(2,2,1,...,1,2,2,1,1)}=\frac{1}{2}.\]
This proves that $J$ is not defined in $(1,2,2,1)^\infty$.

\end{proof}




\begin{proposition}
The only possible (convergent) values of $J$ are $p=\frac{1+\beta_1}2$, or  $1-p=\frac{1-\beta_1}2$.	
\end{proposition}

\begin{proof}
Remember that

a) if $k_0=k_1$, then $a(k_0,k_1)=0$ and $b(k_0,k_1)=\gamma=\frac{1}{4} (1-\beta_1^2)$

b) if $k_0\neq k_1$, then $a(k_0,k_1)=1$ and $b(k_0,k_1)=-\gamma=\frac{1}{4} (\beta_1^2-1)$.

\noindent
In this way the value of $J$ does not change if we permute $1$ and $2$ in the sequence $(k_0,k_1,...)\in \{1,2\}^\mathbb{N}$. For example
\[J(1,1,1,2,2,1,...)=J(2,2,2,1,1,2,...)\]
(if the limit exists).
Therefore, we will introduce another code.
For each given sequence $k\in \{1,2\}^\mathbb{N}$, $k=(k_0,k_1,k_2,...)$ we associate a new sequence $m=m(k)=(m_0,m_1,m_2,..)\in\{a,b\}^\mathbb{N}$ by the rule: $m_i=a$ if $k_i=k_{i+1}$ and $m_i=b$ if $k_i\neq k_{i+1}$. So, we are looking if there is a change, or not, in the string $k$ by
using the rules
$ \underbrace{11}_a$\,, \,$ \underbrace{22}_a$\,,\,$ \underbrace{12}_b$\,,\, $ \underbrace{21}_b$.

For example, given a sequence $k$ of the form
$$ k=(1,2,1,1,2,2,... ),$$
then, we associate
$m=(b,b,a,b,..).$ Clearly, we can consider $J$ defined over  $\{a,b\}^\mathbb{N}$, from $J(m(k)):=J(k)$.
\bigskip


\medskip

It can be checked that:
\begin{align*}
J(a,a,m_3,..) &= J(m_3,m_4,..)\\
J(a,b,m_3,..) &= \frac{1}{ \gamma^{-1}- \frac{1}{   J(m_3,m_4,..)}}\\
J(b,a,m_3,..) &= 1- J(m_3,m_4,..)\\
\text{and}\\
J(b,b,m_3,..) &= 1 - \frac{1}{ \gamma^{-1}- \frac{1}{   J(m_3,m_4,..)}}.
\end{align*}

\medskip






In this way when looking to sequences $m$,  the finite strings with $(aa)^{n}$, $n\in \mathbb{N}$, can be deleted (when $J$ converges). That is,
\[J(m_0,m_1,...,m_j,a,a,m_{j+2},...)=J(m_0,m_1,...,m_j,m_{j+2},...). \]
Furthermore, it can be checked that
\[J(a,b,a,b,m_5,m_6,...)=J(b,a,b,a,m_5,m_6,...)=J(m_5,m_6,m_7,...).\]
In this way, if the fraction expansion $J(m)$  exists, then one can show that this value
does not change if we delete finite parts of the sequence where appears  $baba$ or $abab$.

In a  first moment we consider only the possibility of deleting or including the string $aa$ in the sequence.
From Proposition \ref{oui} 
we obtain that if $m=(m_1,...,m_k,a,a,a...)$, then $J(m)$ is not defined. For the other cases we consider the letters of $m$ as block of length 2

$$m=([m_1,m_2],[m_3,m_4],[m_5,m_6],...).$$

As we are interested in the value of $J$, we can assume that no blocks have the form $[a,a]$ (we can delete them), and, also that no blocks have the form $[b,b]$, because we can change this one for the pair of blocks $[b,a],[a,b]$. 

From now on it is natural to consider one level up of symbolic representation. We get a new code introducing a new dictionary where we associate $\alpha=ab$ and $\beta=ba$. 
In this way, for $m=(m_0,m_1,m_2,...)=([m_0,m_1],[m_2,m_3],...)$ we associate $w=(w_0,w_1,w_2,...)$, where $w_i=\alpha$, if $[m_{2i},m_{2i+1}]=[a,b]$, and $w_i=\beta$, if $[m_{2i},m_{2i+1}]=[b,a]$.




We need to study the possible values of $J$ over $\{\alpha,\beta\}^{\mathbb{N}}$.
First we remark that the string $\alpha,\alpha$ in $w$ corresponds to the string $a,b,a,b$ in $m$, which can be deleted without changes of the value of $J$. From Lemma \ref{1221} we get that $J$ is not defined if $w=(w_1,...,w_k,\alpha,\alpha,\alpha,...)$. For any other $w$ there is some sequence $w'$ with the same value for $J$ and such that $w'$ does not contain $\alpha,\alpha$. Furthermore, the same argument can be applied for the string $\beta,\beta$ in $w$. The conclusion is that for any sequence where $J$ is well defined, it is equal to $J(\alpha,\beta,\alpha,\beta,\alpha,...)$ or $J(\beta,\alpha,\beta,\alpha,\beta,...)$.

%
%










From now on we are interested in to determine what are the possible two values of $J$. Consider the transformation

$$ x\,\to\,f_1(x)= 1- \frac{1}{\gamma^{-1} - \frac {1}{x}}.$$
The string $\beta \alpha$ means $baab$, which corresponds to
$J(m_1,m_2,m_3,..) = 1 - \frac{1}{ \gamma^{-1}- \frac{1}{   J(m_3,m_4,..)}..}.$
Note that if the expansion $J(m_3,m_4,..)$ exists for the string $(m_3,m_4,..)$, then it also exists the one for $J(m_1,m_2,m_3,..) $.
In this way $f_1(J (m_3,m_4,..))= J(m_1,m_2,m_3,..) .$

\medskip

Consider now $f_2$ defined by

$$ x \,\to\, f_2(x)=\frac{1}{\gamma^{-1}-\frac {1}{1-x}}\,\,.$$
The string $ \alpha\, \beta$ means $abba$, that is, it corresponds to
$J(m_1,m_2,m_3,..) = \frac{1}{\gamma^{-1}-\frac {1}{1-J(m_3,m_4,..)}}.$
In this way $f_2(J(m_3,m_4,..))= J(m_1,m_2,m_3,..).$

\medskip

Note that the fixed points for both functions $f_1(x)= 1- \frac{1}{\gamma^{-1} - \frac {1}{x}}$ and
$f_2(x)=\frac{1}{\gamma^{-1}-\frac {1}{1-x}}$ are the same:  $p=\frac{1+\beta_1}{2}$ and $1-p=\frac{1-\beta_1}{2}$.
Furthermore, the interval $[1-p,p]$ is invariant by  $f_1$ and also by $f_2$. The point $p$ is a global attractor for $f_1$ in $(1-p,p]$, and, $1-p$ is a global attractor for $f_2$ in $[1-p,p)$.

As we have seen in Lemma \ref{jexpansion} it is natural to truncate $J(k_0,k_1,k_2,\dots)$ (at level $r$ for instance) by taking in the last position $r$, in the expansion of $J$, the value $1/2$. As $1/2  \in [1-p,p]$ (in fact is its center), and the interval $[1-p,p]$ is left invariant by the diffeomorphisms $g_{a}(x)=\frac{\gamma}{x}$ and $g_{b}(x)=1-\frac{\gamma}{x}$, when the limit exists the successive truncations should converge to $p$ or to $1-p$.

Then, the only possible (convergent) values attained by the continuous fraction expansion of $J$ in Lemma \ref{jexpansion} are $p$ or  $1-p$.
\medskip

\end{proof}

\medskip

From the above Proposition one can estimate the entropy of $\mu$ via the expression $ h(\mu)= - \int \log J_{\mu} d \mu. $

\section{A Large Deviation Principle for $\mu$} \label{LDPs}

We assume that $H$ and $L$ satisfy the Assumption A.

We want to prove the Theorem \ref{teo2}. In this way denote
$$ c (t) = \lim_{n \to \infty}  \frac{1}{n}\,\, \log \int e^{t\,(\,A(z) + A(\sigma(z)) + A(\sigma^{2} (z)) + ...
	+ A(\sigma^{n-1} (z))\, \,)} d \mu (z),$$
for each $t \in \mathbb{R}$.

The function
$c(t)$ is  called the free energy on the point $t$ for the probability $\mu$ and the classical observable $A$ (see \cite{El}). As we said before it is a classical result that if $c$ is differentiable then a Large Deviation Principle is true for $\mu$ and $A$ (see \cite{Lo3} for a proof with details). In this case the deviation function $I$ is the Legendre transform of $c$ (see \cite{El} or \cite{Lo3}).

We will show the explicit expression of $c(t)$ which is clearly a differentiable function - therefore, will follow the LDP claim  of Theorem \ref{teo2}. More precisely, we will prove the following result.

\begin{proposition}\label{c} If $A:\{1,2\}^{\mathbb{N}}\to\mathbb{R}$ depends only on the first coordinate of $x$, then for any $t\in \mathbb{R}$,
	\begin{equation*} c(t)= \frac{1}{2}\log \bigg(\Big[ e^{t\, A(1)}+e^{t\,A(2)}\Big]^{2}-\Big[ \beta_{1}e^{t\, A(1)}+\beta_2 e^{t\, A(2)}\Big]^{2}\bigg)-\log(2).
	\end{equation*}
\end{proposition}

\bigskip

Note that the explicit expression of $c(t)$ permits to estimate explicitly (via Legendre transform) the deviation function $I$ (and, so the $M$ above).
\medskip

Suppose $A:\{1,2\} \to \mathbb{R}$ is a function. We define, for $t\in \mathbb{R}$,

$$Q_n(t) = \int e^{t\, (  A(x_0) + A(x_1) +...+ A(x_n) ) }d \mu (x) = $$
$$ \sum_{j_0}\, \sum_{j_1}\, ...\sum_{j_n} e^{t\, (  A(j_0) + A(j_1) +...+ A(j_n) ) }\, \mu ( j_0,j_1,...j_n).$$

Denote $\displaystyle\alpha(t)=  \sum_{j} \beta_{j}\, e^{t\, \, A(j)}$ and $\displaystyle\delta(t)= \sum_{j}  e^{t\, \, A(j)}$. Note that $|\alpha(t)|< | \delta(t)|$.

Before studying the general case of $Q_n$ we will consider an example.
\begin{example}
	We will compute $Q_3(t)$. From example \ref{ex},

	$$\mu(j_0,j_1,j_2,j_3) = \frac{\mu(j_1,j_2,j_3)}{2} - \frac{\beta_{j_0} \beta_{j_1}}{4}\mu(j_2,j_3)   +  \frac{\beta_{j_0} \beta_{j_2}}{16}-  \frac{\beta_{j_0} \beta_{j_3}}{16},$$
	then, we get
	$$Q_3(t) = \sum_{j_0}\, \sum_{j_1}\, \sum_{j_2}  \sum_{j_3} e^{t\, (  A(j_0) + A(j_1) +A(j_2) + A(j_3)) }\, \mu ( j_0,j_1,j_2,j_3)=$$
	$$ =\bigg[\frac{1}{2}\sum_{j_0}\, e^{t\,   A(j_0)  }\,\bigg]\,\sum_{j_1}\, \sum_{j_2} \sum_{j_3}\, e^{t\, (   A(j_1) +A(j_2)+ A(j_3)) }\, \mu (j_1,j_2,j_3)\,+ $$
	$$
	-\frac{1}{4}\bigg[\sum_{j_0}e^{t\, A(j_0)}\beta_{j_0}\bigg]\, \bigg[\sum_{j_1}e^{t\,A(j_1)}\beta_{j_1}\bigg]\, \sum_{j_2,j_3}   e^{t\, ( A(j_2)+ A(j_3)) } \mu(j_2,j_3)
	$$
	$$+\frac{1}{16}\sum_{j_0}\, \sum_{j_1}\, \sum_{j_2}   \sum_{j_3} e^{t\, (  A(j_0) + A(j_1) +A(j_2))+ A(j_3) }\beta_{j_0} \beta_{j_{2}}$$
	$$-\frac{1}{16}\sum_{j_0}\, \sum_{j_1}\, \sum_{j_2}   \sum_{j_3} e^{t\, (  A(j_0) + A(j_1) +A(j_2))+ A(j_3)) }\beta_{j_0} \beta_{j_{3}}=$$
	$$ =\frac{1}{2}\delta(t)\,\ Q_2(t) -  \frac{1}{4}\alpha(t)^2 Q_1(t).
	$$
	
\end{example}

Now we will extend the above ideas for  a more general result.

\begin{theorem}\label{teoQ} For any $n\in \mathbb{N}$ and $t\in\mathbb{R}$, we have
	
	\begin{equation}\label{main1}
	Q_n (t)=\left[\begin{array}{l}\frac{1}{2}\delta(t)\,Q_{n-1}(t)- \frac{1}{4}\alpha(t)^2\,Q_{n-2}(t)+ \frac{1}{8}\delta(t)\,\alpha(t)^2\, Q_{n-3}(t) \\ \\
	\,\,-  \frac{1}{16}\delta(t)^2\,\alpha(t)^2\, \, Q_{n-4}(t)+\frac{1}{32}\delta(t)^3\,\alpha(t)^2\, \, Q_{n-5}(t)-...+\\  \\      +\frac{(-1)^{n-1}}{2^{n-2}}\, \delta(t)^{n-4}\,\alpha(t)^2\, \, Q_{ 2}(t)
	+ \frac{(-1)^n}{2^{n-1}}\, \delta(t)^{n-3}\,\alpha(t)^2\, \, Q_{ 1}(t)\end{array}\right] .
	\end{equation}
	
\end{theorem}

\begin{proof}
	By definition
	$$Q_n(t) =  \sum_{j_0}\, \sum_{j_1}\, ...\sum_{j_n} e^{t\, (  A(j_0) + A(j_1) +...+ A(j_n) ) }\, \mu ( j_0,j_1,...j_n).$$
	From equation (\ref{eq0})
	
	$$   \mu (j_0 , j_1 , j_2,...,j_n) \,=\,\,\, \frac{\mu( j_1,j_2,...,j_n )}{2} - \frac{\beta_{j_0}\, \beta_{j_1}}{2^{2}} \, \mu (j_2,...,j_n) + \,$$
	$$
	\frac{\beta_{j_0}\beta_{j_2}}{2^{3}} \mu( j_3,j_4,...,j_n ) - \frac{\beta_{j_0} \beta_{j_3}}{2^{4}} \mu( j_4,j_5,...,j_n )+...$$
	$$  +\frac{(-1)^{n-2} \beta_{j_0} \beta_{j_{n-2}}}{2^{n-1}} \mu(j_{n-1},j_n)+ \frac{(-1)^{n-1} \beta_{j_0} \beta_{j_{n-1}}}{2^{n+1}}+ \frac{(-1)^{n} \beta_{j_0} \beta_{j_n}}{2^{n+1}} .
	$$
	Then, we get
	
	$$
	Q_n(t)=\frac{1}{2}\sum_{j_0}e^{t\,  A(j_0)}\sum_{j_1,...,j_n} e^{t\, ( A(j_1) +...+ A(j_n) ) }\, \mu (j_1,...j_n)-
	$$
	$$-\frac{1}{2^2}\sum_{j_0}e^{t\,  A(j_0)}\beta_{j_0}\sum_{j_1}e^{tA(j_1)}\beta_{j_1}\sum_{j_2,...,j_n} e^{t\, ( A(j_2) +...+ A(j_n) ) }\, \mu (j_2,...j_n)$$
	$$+\frac{1}{2^3}\sum_{j_0}e^{tA(j_0)}\beta_{j_0}\sum_{j_1}e^{tA(j_1)}\sum_{j_2}e^{tA(j_2)}\beta_{j_2}\sum_{j_3,...,j_n} e^{t\, ( A(j_3) +...+ A(j_n) ) }\, \mu (j_3,...j_n)$$
	$$...+ \frac{(-1)^{n}}{2^{n+1}}  \sum_{j_0}e^{tA(j_0)}\beta_{j_0} \sum_{j_n}e^{tA(j_n)}\beta_{j_n}\sum_{j_2}e^{tA(j_2)}...\sum_{j_{n-1}}e^{tA(j_{n-1})}$$
	(using the definitions of $\alpha(t)$ and $\delta(t)$)
	$$=\frac{1}{2}\delta(t)\,Q_{n-1}(t)- \frac{1}{4}\alpha(t)^2\,Q_{n-2}(t)+ \frac{1}{8}\delta(t)\,\alpha(t)^2\, Q_{n-3}(t)-  \frac{1}{16}\delta(t)^2\,\alpha(t)^2\, \, Q_{n-4}(t)+$$
	$$\frac{1}{32}\delta(t)^3\,\alpha(t)^2\, \, Q_{n-5}(t)-...      + \frac{(-1)^{n-1}}{2^{n-2}}\, \delta(t)^{n-4}\,\alpha(t)^2\, \, Q_{ 2}(t)  $$
	$$+ \frac{(-1)^n}{2^{n-1}}\, \delta(t)^{n-3}\,\alpha(t)^2\, \, Q_{ 1}(t)
	+
	\frac{(-1)^{n-1}}{2^{n+1}}\alpha(t)^2\delta(t)^{n-2}+ \frac{(-1)^{n}}{2^{n+1}}\alpha(t)^2\delta(t)^{n-2}
	$$
	
	$$=\frac{1}{2}\delta(t)\,Q_{n-1}(t)- \frac{1}{4}\alpha(t)^2\,Q_{n-2}(t)+ \frac{1}{8}\delta(t)\,\alpha(t)^2\, Q_{n-3}(t)-  \frac{1}{16}\delta(t)^2\,\alpha(t)^2\, \, Q_{n-4}(t)+$$
	$$\frac{1}{32}\delta(t)^3\,\alpha(t)^2\, \, Q_{n-5}(t)-...      + \frac{(-1)^{n-1}}{2^{n-2}}\, \delta(t)^{n-4}\,\alpha(t)^2\, \, Q_{ 2}(t) + \frac{(-1)^n}{2^{n-1}}\, \delta(t)^{n-3}\,\alpha(t)^2\, \, Q_{ 1}(t).
	$$
	
\end{proof}

\begin{proposition}
	\[Q_{n+2}(t) = \frac{\delta(t)^{2}-\alpha(t)^{2}}{4}Q_{n}(t).\]
\end{proposition}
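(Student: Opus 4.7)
The plan is to apply the recursion of Theorem \ref{teoQ} twice and exploit a sign cancellation. First, notice that the recursion can be written uniformly for every $m \geq 2$ as
$$Q_m(t) = \frac{\delta(t)}{2}\, Q_{m-1}(t) \,+\, \sum_{k=2}^{m-1} (-1)^{k-1}\, \frac{\delta(t)^{k-2}\,\alpha(t)^{2}}{2^{k}}\, Q_{m-k}(t),$$
because the last two summands in Theorem \ref{teoQ} (those involving $Q_2$ and $Q_1$) are precisely the $k=m-2$ and $k=m-1$ entries of this sum (the signs match since $(-1)^{n-3}=(-1)^{n-1}$ and $(-1)^{n-2}=(-1)^{n}$). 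For brevity I write $\delta$ and $\alpha$ below.

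Applying this formula to $Q_{n+2}$ and splitting off the two leading terms,
$$Q_{n+2} \,=\, \frac{\delta}{2}\, Q_{n+1} \,-\, \frac{\alpha^{2}}{4}\, Q_{n} \,+\, \sum_{k=3}^{n+1} (-1)^{k-1}\, \frac{\delta^{k-2}\,\alpha^{2}}{2^{k}}\, Q_{n+2-k}.$$
Re-indexing the tail by $j=k-1$ turns it into $\sum_{j=2}^{n} (-1)^{j}\, \frac{\delta^{j-1}\,\alpha^{2}}{2^{j+1}}\, Q_{n+1-j}$.

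Now apply the same recursion to $Q_{n+1}$ and multiply by $\delta/2$ to obtain
$$\frac{\delta}{2}\, Q_{n+1} \,=\, \frac{\delta^{2}}{4}\, Q_{n} \,+\, \sum_{k=2}^{n} (-1)^{k-1}\, \frac{\delta^{k-1}\,\alpha^{2}}{2^{k+1}}\, Q_{n+1-k}.$$
Plugging this back into the previous display, the two remaining tail sums run over the same indices and the same quantities $Q_{n+1-k}$ with exactly opposite signs, since $(-1)^{k-1}+(-1)^{k}=0$; hence they cancel term by term. What survives is $\frac{\delta^{2}}{4}Q_{n}-\frac{\alpha^{2}}{4}Q_{n}=\frac{\delta^{2}-\alpha^{2}}{4}\, Q_{n}$, which is the claim.

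There is no real obstacle here: once the recursion is cast in the uniform form above, the argument is pure bookkeeping, the telescoping being forced by the alternating signs and the geometric factor $\delta/2$ between successive coefficients. One only needs to verify the small cases $n=0,1$ separately; for instance for $n=0$ one has $Q_0=\delta/2$ and a direct computation from $\mu_3(j_0,j_1,j_2)=\frac{1}{8}[1-\beta_{j_0}\beta_{j_1}-\beta_{j_1}\beta_{j_2}+\beta_{j_0}\beta_{j_2}]$ gives $Q_2=\delta(\delta^2-\alpha^2)/8$, matching $\frac{\delta^2-\alpha^2}{4}Q_0$.
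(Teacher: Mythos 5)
Your proof is correct and takes essentially the same route as the paper: the paper also substitutes the recursion of Theorem \ref{teoQ} written for the shorter index into the one for the longer index and lets the alternating tails cancel. The only difference is one of care, not of method --- you make the term-by-term sign cancellation explicit and verify the small cases, both of which the paper leaves implicit.
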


\begin{proof}
	
	We have that
	$$ Q_n (t)\,=\frac{1}{2}\delta(t)\,Q_{n-1}(t)- \frac{1}{4}\alpha(t)^2\,Q_{n-2}(t)+ \frac{1}{8}\delta(t)\,\alpha(t)^2\, Q_{n-3}(t)-  \frac{1}{16}\delta(t)^2\,\alpha(t)^2\, \, Q_{n-4}(t)+$$
	$$\frac{1}{32}\delta(t)^3\,\alpha(t)^2\, \, Q_{n-5}(t)-...      + \frac{(-1)^{n-1}}{2^{n-2}}\, \delta(t)^{n-4}\,\alpha(t)^2\, \, Q_{ 2}(t)  + \frac{(-1)^n}{2^{n-1}}\, \delta(t)^{n-3}\,\alpha(t)^2\, \, Q_{ 1}(t).
	$$
	
	Using the same formula applied in $Q_{n-1}(t)$ we get
	$$Q_{n-1}(t) = \frac{1}{2}\delta(t)\,Q_{n-2}(t)- \frac{1}{4}\alpha(t)^2\,Q_{n-3}(t)+ \frac{1}{8}\delta(t)\,\alpha(t)^2\, Q_{n-4}(t)-  \frac{1}{16}\delta(t)^2\,\alpha(t)^2\, \, Q_{n-5}(t)+$$
	$$\frac{1}{32}\delta(t)^3\,\alpha(t)^2\, \, Q_{n-6}(t)-...      + \frac{(-1)^{n-2}}{2^{n-3}}\, \delta(t)^{n-5}\,\alpha(t)^2\, \, Q_{ 2}(t)  + \frac{(-1)^{n-1}}{2^{n-2}}\, \delta(t)^{n-4}\,\alpha(t)^2\, \, Q_{ 1}(t).
	$$
	If we replace $ Q_{n-1} $ in the first equation for the right hand side of the second equation we get:
	$$ Q_n (t)\,=\frac{\delta(t)^{2}-\alpha(t)^{2}}{4}\,Q_{n-2}(t). $$
\end{proof}

\bigskip

\noindent
{\bf Proof of Proposition \ref{c}:}
From the above result we get as a corollary that $Q_n(t)$ growths exponentially like    $e^{\frac{\sqrt{\delta(t)^{2}-\alpha(t)^{2}}}{2}}$.
In this way for each fixed $t$:
$$ c(t) = \lim_{n \to \infty} \frac{1}{n} \log Q_n (t) = \log \frac{\sqrt{\delta(t)^{2}-\alpha(t)^{2}}}{2}$$
\begin{equation} \label{oot} = \frac{1}{2}\log \bigg(\Big[\sum_{j_0}  e^{t\, \, A(j_0)}\Big]^{2}-\Big[\sum_{j_0}  \beta_{j_0}e^{t\, \, A(j_0)}\Big]^{2}\bigg)-\log(2).
\end{equation}
\qed

The above function $c$ is differentiable on $t$.
\medskip

\section{Appendix}

On the Appendix we will give the proof of several technical results which were used before.

\medskip

\begin{proposition} \label{stat1} For $n\in \{2,3,...\}$,
\[\mu_{\infty,n+1} (1 , j_1 , j_3,...,j_n)+ \mu_{\infty,n+1} (2, j_1 , j_3,..,j_n)= \mu_{\infty,n} ( j_1 , j_3,..,j_n).\]

\end{proposition}

\bigskip

\begin{proof} As $ P_1\otimes P_{j_1} \otimes  P_{j_2}\,\otimes...\otimes P_{j_n}+P_2\otimes P_{j_1} \otimes  P_{j_2}\,\otimes...\otimes P_{j_n}=I\otimes P_{j_1} \otimes  P_{j_2}\,\otimes...\otimes P_{j_n}$, we get
 $$\mu_{\infty,n+1}(1, j_1 , j_2,...,j_n)+\mu_{\infty,n+1}(2, j_1 , j_2,...,j_n)=$$
 $$ =\frac{1}{2^{n+1}}\text{Tr}\left[\prod_{i=1}^{n} [\, I^{\otimes n+1} \,-\, (\sigma^x_{i}\otimes \sigma^x_{i+1})_{n+1}]( I\otimes  P_{j_1} \otimes  P_{j_2}\,\otimes...\otimes P_{j_n})\right]$$
  $$= \frac{1}{2^{n+1}}\text{Tr}\left[I^{\otimes n+1} \circ \prod_{i=2}^{n} [\, I^{\otimes n+1} \,-\, (\sigma^x_{i}\otimes \sigma^x_{i+1})_{n+1}]( I\otimes  P_{j_1} \otimes  P_{j_2}\,\otimes...\otimes P_{j_n})\right],$$
where in the last equality we use the expression
\footnotesize
\[\text{Tr} \left[-(\sigma^x_{1}\otimes \sigma^x_{2})_{n+1} \circ \prod_{i=2}^{n} [\, I^{\otimes n+1} \,-\, (\sigma^x_{i}\otimes \sigma^x_{i+1})_{n+1}]( I\otimes  P_{j_1} \otimes  P_{j_2}\,\otimes...\otimes P_{j_n})\right]=0,   \]
\normalsize
which follows from Tr$(\sigma^x)=0$.

 Note also that, for any $B_1,...,B_n$, we have
$$\text{Tr} (I \otimes B_1
\otimes ...\otimes B_n)=2\, \text{Tr} (B_1
\otimes ...\otimes B_n).$$
Then, we get
$$\mu_{\infty,n+1}(1, j_1 , j_2,...,j_n)+\mu_{\infty,n+1}(2, j_1 , j_2,...,j_n)$$
$$= \frac{1}{2^{n+1}}\text{Tr}\left[I^{\otimes n+1} \circ \prod_{i=2}^{n} [\, I^{\otimes n+1} \,-\, (\sigma^x_{i}\otimes \sigma^x_{i+1})_{n+1}]( I\otimes  P_{j_1} \otimes  P_{j_2}\,\otimes...\otimes P_{j_n})\right]$$
$$= \frac{1}{2^{n}}\text{Tr}\left[\prod_{i=1}^{n-1} [\, I^{\otimes n} \,-\, (\sigma^x_{i}\otimes \sigma^x_{i+1})_{n}](   P_{j_1} \otimes  P_{j_2}\,\otimes...\otimes P_{j_n})\right]$$
$$=\mu_{\infty,n}( j_1 , j_2,...,j_n).$$
\end{proof}

\medskip

\begin{theorem}\label{medidacilindro1}
Suppose $H$ and $L$ satisfy the Assumption A. Let $\mu$ be the probability measure defined in Corollary \ref{sta}. For any $n\geq 2$, we have
\small
\begin{equation}\label{eq0}
\mu(k,j_1,...,j_n)= \frac{\mu( j_1,...,j_n )}{2}  + \sum_{i=1}^{n-2}\frac{(-1)^i \beta_k \beta_{j_i} }{2^{i+1}}\mu(j_{i+1},...,j_n)  +\frac{(-1)^n\beta_k(\beta_{j_n}-\beta_{j_{n-1}})}{2^{n+1}} .
\end{equation}
\normalsize
and
\[\mu(k,j_1)= \frac{\mu(j_1)}{2} -\frac{\beta_k\beta_{j_1}}{4}.\]
\end{theorem}

\begin{proof}
For $n=1$ the above result can be verified directly, because
\[\mu(k,j_1)=\frac{1-\beta_k\beta_{j_1}}{4},\,\,\,\text{and}\,\,\, \mu(j_1)=\frac{1}{2}.\]

For $n\geq 2$, we have

\[\mu(k, {j_1} ,  {j_2},..., {j_n}  )
=\frac{1}{2^{n+1}}\text{Tr}\left[\prod_{i=1}^{n} [\, I^{\otimes n+1} \,-\, (\sigma^x_{i}\otimes \sigma^x_{i+1})_{n+1}](   P_k\otimes P_{j_1} \otimes  P_{j_2}\,\otimes...\otimes P_{j_n})\right]\]
\[= \frac{1}{2^{n+1}}\text{Tr}\left[I^{\otimes n+1}\circ \prod_{i=2}^{n} [\, I^{\otimes n+1} \,-\, (\sigma^x_{i}\otimes \sigma^x_{i+1})_{n+1}](   P_k\otimes P_{j_1} \otimes  P_{j_2}\,\otimes...\otimes P_{j_n})\right]          \]
\[\,\,\,\,- \frac{1}{2^{n+1}}\text{Tr}\left[(\sigma^x_{1}\otimes \sigma^x_{2})_{n+1}\circ\prod_{i=2}^{n} [\, I^{\otimes n+1} \,-\, (\sigma^x_{i}\otimes \sigma^x_{i+1})_{n+1}](   P_k\otimes P_{j_1} \otimes  P_{j_2}\,\otimes...\otimes P_{j_n})\right]          \]
\normalsize
$ \,\,= \frac{1}{2}\mu( j_1,j_2,...,j_n ) $
\[  \,\,\,\,- \frac{1}{2^{n+1}}\text{Tr}\left[\prod_{i=2}^{n} [\, I^{\otimes n+1} \,-\, (\sigma^x_{i}\otimes \sigma^x_{i+1})_{n+1}](   \sigma^x(P_k)\otimes \sigma^x(P_{j_1}) \otimes  P_{j_2}\,\otimes...\otimes P_{j_n})\right]          \]
$ \,\,= \frac{1}{2}\mu( j_1,j_2,...,j_n ) $
\footnotesize
$$  \,\,- \frac{1}{2^{n+1}}\text{Tr}\left[I^{\otimes n+1}\circ\prod_{i=3}^{n} [\, I^{\otimes n+1} \,-\, (\sigma^x_{i}\otimes \sigma^x_{i+1})_{n+1}](   \sigma^x(P_k)\otimes \sigma^x(P_{j_1}) \otimes  P_{j_2}\,\otimes...\otimes P_{j_n})\right]         $$
\[  \,\,\,\,+ \frac{1}{2^{n+1}}\text{Tr}\left[(\sigma^x_{2}\otimes \sigma^x_{3})_{n+1}\circ\prod_{i=3}^{n} [\, I^{\otimes n+1} \,-\, (\sigma^x_{i}\otimes \sigma^x_{i+1})_{n+1}](   \sigma^x(P_k)\otimes \sigma^x(P_{j_1}) \,\otimes...\otimes P_{j_n})\right]          \]
\normalsize
$ \,\,= \frac{1}{2}\mu( j_1,j_2,...,j_n ) - \frac{\beta_k \beta_{j_1}}{4}\mu(j_2,...,j_n)$
\[  \,\,\,\,+ \frac{1}{2^{n+1}}\text{Tr}\left[\prod_{i=3}^{n} [\, I^{\otimes n+1} \,-\, (\sigma^x_{i}\otimes \sigma^x_{i+1})_{n+1}](   \sigma^x(P_k)\otimes P_{j_1} \otimes  \sigma^x(P_{j_2})\,\otimes...\otimes P_{j_n})\right]         . \]
\normalsize
Using similar computations we get

\begin{align*}
\mu(k,&j_1,...,j_n)
 =\frac{1}{2}\mu( j_1,j_2,...,j_n ) - \frac{\beta_k\beta_{j_1}}{2^{2}}\mu(j_2,...,j_n) +\frac{\beta_k\beta_{j_2}}{2^{3}}\mu(j_3,...,j_n)\\
&+...+\frac{(-1)^{n-2}\beta_k\beta_{j_{n-2}}}{2^{n-1}}\mu(j_{n-1},j_n)+ \\
& +\,\frac{(-1)^{n-1}}{2^{n+1}}\,\text{Tr} \left\{ \begin{array}{l}
(\, + \, (\underbrace{I \otimes  ... \otimes  I}_{n+1})\,-\,
(I \otimes I  \otimes...\otimes I \otimes \sigma^x \otimes \sigma^x)\,)\,\,\circ
\\
( \sigma^x P_k \otimes  P_{j_1} \otimes ...\otimes \sigma^x P_{j_{n-1}} \otimes P_{j_n}))\end{array}\right\} \\
&= \frac{1}{2}\mu( j_1,j_2,...,j_n ) - \frac{\beta_k\beta_{j_1}}{2^{2}}\mu(j_2,...,j_n) +\frac{\beta_k\beta_{j_2}}{2^{3}}\mu(j_3,...,j_n)-...\\
&...+\frac{(-1)^{n-2}\beta_k\beta_{j_{n-2}}}{2^{n-1}}\mu(j_{n-1},j_n) +\frac{(-1)^{n-1} \beta_{k} \beta_{j_{n-1}}}{2^{n+1}}+ \frac{(-1)^{n} \beta_{k} \beta_{j_n}}{2^{n+1}}.
\end{align*}

\end{proof}

\medskip

\begin{proposition}  \label{nmix1} For any $n\geq 2$ we have that
$$\sum_{j_2,..,j_n}\mu (a ,b , j_2,...,j_n,c,d)=\mu(a,b)\mu(c,d )+\frac{(-1)^{n}( \beta_{b}-\beta_a) (\beta_{c}-\beta_{d})}{2^{4}}. $$
Particularly, $\mu$ is not mixing.
\end{proposition}

\begin{proof} From Theorem \ref{medidacilindro} we get
$$   \mu (a ,b , j_2,...,j_n,c,d) \,=\,\,\, \frac{\mu( b,j_2,...,j_n,c,d )}{2} - \frac{\beta_{a}\, \beta_{b}}{2^{2}} \, \mu (j_2,...,j_n,c,d) + \,$$
 $$
+\frac{\beta_{a}\beta_{j_2}}{2^{3}} \mu( j_3,j_4,...,j_n,c,d ) - \frac{\beta_{a} \beta_{j_3}}{2^{4}} \mu( j_4,j_5,...,j_n,c,d )+...$$
$$ +\frac{(-1)^{n} \beta_{a} \beta_{j_{n}}}{2^{n+1}} \mu(c,d)+ \frac{(-1)^{n+1} \beta_{a} (\beta_{c}-\beta_{d})}{2^{n+3}} . $$
Note that  $$\sum_{j_2=1}^2\frac{\beta_{a}\beta_{j_2}}{2^{3}} \mu( j_3,j_4,...,j_n,c,d )=0,$$
because $\beta_2=-\beta_1$ and ${j_2}$ only appears in $\beta_{j_2}$. In general, for $k=2,...,n$
$$\sum_{j_k=1}^2\frac{\beta_{a}\beta_{j_k}}{2^{k+1}}=0.$$
Therefore, we will separate with - the notation $\sum_{k=2}^n\beta_{j_k}(...)$ - all terms that contain $\frac{\beta_{a}\beta_{j_k}}{2^{k+1}}$.
In this way
$$   \mu (a ,b , j_2,...,j_n,c,d) \,=\frac{\mu( b,j_2,...,j_n,c,d )}{2} - \frac{\beta_{a}\, \beta_{b}}{2^{2}} \, \mu (j_2,...,j_n,c,d)+$$
$$ + \frac{(-1)^{n+1} \beta_{a} (\beta_{c}-\beta_{d})}{2^{n+3}}+\sum_{k=2}^n\beta_{j_k}(...)= $$

$$=\frac{1}{2} \bigg[\frac{\mu( j_2,...,j_n,c,d )}{2} - \frac{\beta_{b}\, \beta_{j_2}}{2^{2}} \, \mu (j_3,...,j_n,c,d)\bigg] + $$

$$+\frac{1}{2} \bigg[\frac{\beta_{b}\beta_{j_3}}{2^{3}} \mu( j_4,...,j_n,c,d ) - \frac{\beta_{b} \beta_{j_4}}{2^{4}} \mu(j_5,...,j_n,c,d ) \bigg]+...$$
$$+\frac{1}{2} \bigg[ \frac{(-1)^{n-1} \beta_{b} \beta_{j_{n}}}{2^{n}} \mu(c,d)+ \frac{(-1)^{n} \beta_{b} (\beta_{c}-\beta_{d})}{2^{n+2}} \bigg]-$$
$$ - \frac{\beta_{a}\, \beta_{b}}{2^{2}} \, \mu (j_2,...,j_n,c,d) +  \frac{(-1)^{n+1} \beta_{a} (\beta_{c}-\beta_{d})}{2^{n+3}}+  \sum_{k=2}^n\beta_{j_k}(...)= $$

 $$=\frac{\mu( j_2,...,j_n,c,d )}{2^2}- \frac{\beta_{a}\, \beta_{b}}{2^{2}} \, \mu (j_2,...,j_n,c,d) +$$

 $$  + \frac{(-1)^{n} \beta_{b} (\beta_{c}-\beta_{d})}{2^{n+3}} + \frac{(-1)^{n+1} \beta_{a} (\beta_{c}-\beta_{d})}{2^{n+3}}+  \sum_{k=2}^n\beta_{j_k}(...)=$$
 $$=\mu(a,b)\mu( j_2,...,j_n,c,d )+\frac{(-1)^{n}( \beta_{b}-\beta_a) (\beta_{c}-\beta_{d})}{2^{n+3}}+  \sum_{k=2}^n\beta_{j_k}(...). $$

 Then, we get
 $$
\mu (a ,b , j_2,...,j_n,c,d) \,=$$
$$
\mu(a,b)\mu( j_2,...,j_n,c,d )+\frac{(-1)^{n}( \beta_{b}-\beta_a) (\beta_{c}-\beta_{d})}{2^{n+3}}+  \sum_{k=2}^n\beta_{j_k}(...).
$$

We know that $\mu$ is stationary, so
\[\sum_{j_2,...,j_n}\mu( j_2,...,j_n,c,d ) = \mu(c,d).\]
It follows that
 \[\sum_{j_2,...,j_n}\mu (a ,b , j_2,...,j_n,c,d) \,=\mu(a,b)\mu(c,d )+\frac{(-1)^{n}( \beta_{b}-\beta_a) (\beta_{c}-\beta_{d})}{2^4}.\]

\bigskip

From the above   for $(a,b)=(1,2)$ and $(c,d)=(2,1)$ we finally get that
$$ \lim_{n \to \infty} \mu (\sigma^{-2n} (c,d) \cap (a,b)\,) \neq \mu(a,b)\, \mu(c,d) ,$$
and, this shows that $\mu$ is not mixing (see \cite{Man} or \cite{PoYu}). This also implies that  for some continuous functions there is no decay of correlation to $0$.

\end{proof}

\medskip

 \begin{proposition}\label{recursive1}
 	For $n\geq 1$,
 	$$\mu (k_0 , k_{1} , ...,k_n)=\frac{1}{2}\bigg(1-\frac{\beta_{k_0}}{\beta_{k_1}}\bigg)\mu (k_1 , ...,k_{n}) +\frac{\beta_{k_0}}{2} \bigg(\frac{1}{2\beta_{k_{1}}}-\frac{\beta_{k_{1}}}{2}\bigg)\mu (k_2,...,k_{n}).$$
  \end{proposition}

 \begin{proof}
 The cases $n=1,2$ correspond to the equations
 \[ \mu (k_0 , k_{1})=\frac{1}{2}\bigg(1-\frac{\beta_{k_0}}{\beta_{k_1}}\bigg)\mu (k_1) +\frac{\beta_{k_0}}{2} \bigg(\frac{1}{2\beta_{k_{1}}}-\frac{\beta_{k_{1}}}{2}\bigg), \]
 and,
 \[\mu (k_0 , k_{1} ,k_2)=\frac{1}{2}\bigg(1-\frac{\beta_{k_0}}{\beta_{k_1}}\bigg)\mu (k_1 , k_{2}) +\frac{\beta_{k_0}}{2} \bigg(\frac{1}{2\beta_{k_{1}}}-\frac{\beta_{k_{1}}}{2}\bigg)\mu (k_2) ,\]
 which can be checked directly from example \ref{example} and definition $\mu(k)=1/2, \,k=1,2$.
 	
 For $n\geq 2$, from (\ref{eq0}) we get the equations
 \begin{align*}
 \frac{2\mu(k_0,k_1,...,k_n)}{\beta_{k_0}} &= \frac{\mu(k_1,...,k_n)}{\beta_{k_0}} + \sum_{i=1}^{n-2}\frac{(-1)^i\beta_{k_i}\mu(k_{i+1},...,k_n)}{2^i}\\
 &\,\,\,+ (-1)^n\frac{\beta_{k_n}-\beta_{k_{n-1}}}{2^n} \\
 &=\frac{\mu(k_1,...,k_n)}{\beta_{k_0}} - \frac{\beta_{k_1}\mu(k_2,...,k_n)}{2}\\
 &\,\,\,+ \sum_{i=2}^{n-2}\frac{(-1)^i\beta_{k_i}\mu(k_{i+1},...,k_n)}{2^i}
+ (-1)^n\frac{\beta_{k_n}-\beta_{k_{n-1}}}{2^n},
 \end{align*}	
 and,
 \begin{align*} 	
 	\frac{\mu(k_1,...,k_n)}{\beta_{k_1}} &= \frac{\mu(k_2,...,k_n)}{2\beta_{k_1}}+\sum_{i=1}^{n-3}\frac{(-1)^i\beta_{k_{i+1}}\mu(k_{i+2},...,k_n)}{2^{i+1}} \\
 	&\,\,\,\,+(-1)^{n-1}\frac{\beta_{k_n}-\beta_{k_{n-1}}}{2^n} \\
 	&=\frac{\mu(k_2,...,k_n)}{2\beta_{k_1}}+\sum_{j=2}^{n-2}\frac{(-1)^{j-1}\beta_{k_{j}}\mu(k_{j+1},...,k_n)}{2^{j}} \\
 	&\,\,\,\,+(-1)^{n-1}\frac{\beta_{k_n}-\beta_{k_{n-1}}}{2^n} . 	
 \end{align*} 	
Then,
$$
\frac{2\mu(k_0,k_1,...,k_n)}{\beta_{k_0}} +  	\frac{\mu(k_1,...,k_n)}{\beta_{k_1}}$$ $$
= 	\left(\frac{\mu(k_1,...,k_n)}{\beta_{k_0}} - \frac{\beta_{k_1}\mu(k_2,...,k_n)}{2}\right) + \left(\frac{\mu(k_2,...,k_n)}{2\beta_{k_1}} \right).
$$ 	
Therefore,
$$\mu (k_0 , k_{1} , ...,k_n)=\frac{1}{2}\bigg(1-\frac{\beta_{k_0}}{\beta_{k_1}}\bigg)\mu (k_1 , ...,k_{n}) +\frac{\beta_{k_0}}{2} \bigg(\frac{1}{2\beta_{k_{1}}}-\frac{\beta_{k_{1}}}{2}\bigg)\mu (k_2,...,k_{n}).$$
 	
 \end{proof}

 \medskip

\end{document}